\documentclass[10pt]{article}
\usepackage{amsfonts,amsmath,amssymb,amsthm}
\usepackage{hyperref}
\usepackage{graphicx}
\usepackage[affil-it]{authblk}


\newcommand{\detail}[1]{\par\noi{\bf [Proof detail\ }{#1}
\hfill{\bf ]}\par\noi\hspace{-4pt}}
\renewcommand{\detail}[1]{}







\newcommand{\noi}{\noindent}

\newtheorem{theorem}{Theorem}[section]
\newtheorem{proposition}[theorem]{Proposition}
\newtheorem{corollary}[theorem]{Corollary}
\newtheorem{conjecture}[theorem]{Conjecture}
\newtheorem{lemma}[theorem]{Lemma}

\newtheorem{remark}[theorem]{Remark}

\newcommand{\et}{\end{theorem}}
\newcommand{\bl}{\begin{lemma}}
\newcommand{\el}{\end{lemma}}
\newcommand{\bp}{\begin{proposition}}
\newcommand{\ep}{\end{proposition}}
\newcommand{\bcor}{\begin{corollary}}
\newcommand{\ecor}{\end{corollary}}
\newcommand{\br}{\begin{remark}\rm}
\newcommand{\er}{\end{remark}}
\newcommand{\bcon}{\begin{conjecture}}
\newcommand{\econ}{\end{conjecture}}

\newcommand{\be}{\begin{equation}}
\newcommand{\ee}{\end{equation}}
\newcommand{\ba}{\begin{array}}
\newcommand{\ea}{\end{array}}
\newcommand{\bc}{\be\begin{array}{r@{\,}c@{\,}l}}
\newcommand{\ec}{\end{array}\ee}


\newcommand{\Ga}{\Gamma}

\newcommand{\eps}{\varepsilon}

\newcommand{\La}{\Lambda}
\newcommand{\sig}{\sigma}



\newcommand{\R}{{\mathbb R}}
\newcommand{\N}{{\mathbb N}}
\newcommand{\Z}{{\mathbb Z}}

\renewcommand{\P}{{\mathbb P}}

\newcommand\TT{{\cal T}}







\setlength{\topmargin}{.7cm}
\setlength{\headheight}{0cm}
\setlength{\headsep}{0cm}
\addtolength{\textheight}{3.5cm}
\addtolength{\textwidth}{3cm}
\addtolength{\oddsidemargin}{-1cm}
\addtolength{\oddsidemargin}{-1cm}


\newcommand{\uaw}{\uparrow}

\newcommand{\daw}{\downarrow}
\newcommand{\ben}{\begin{enumerate}}
\newcommand{\een}{\end{enumerate}}
\newcommand{\ii}{\item}
\newcommand{\beqn}{\begin{eqnarray}}
\newcommand{\eeqn}{\end{eqnarray}}
\newcommand{\beqnn}{\begin{eqnarray*}}
\newcommand{\eeqnn}{\end{eqnarray*}}

\newcommand\Tr{\mbox{Tr}^h}
\newcommand\cT{{\cal T}}

\title{Trimming a Tree and the Two-Sided  Skorohod Reflection.}
\date{\today}

\author[1,2]{Emmanuel Schertzer}
\affil[1]{UPMC Univ. Paris 06,
Laboratoire de Probabilit\'es et Mod\`eles Al\'eatoires, CNRS UMR 7599, Paris, France.}
\affil[2]{Coll\`ege de France,
Center for Interdisciplinary Research in Biology, CNRS UMR 7241, Paris, France.}   

\begin{document}

\maketitle


\begin{abstract}
The $h$-trimming of a tree is a natural regularization procedure
which consists in
pruning the 
small branches of a tree: given $h\geq0$,
it is obtained by only keeping the vertices 
having at least one leaf above them at a distance greater or equal to $h$.

The $h$-cut of a function $f$ is the function
of minimal total variation uniformly approximating the increments of $f$ with accuracy $h$, and
can be explicitly constructed via 
the two-sided Skorohod reflection of $f$ on the interval $[0,h]$. 

In this work, we show that the contour path of the $h$-trimming
of a rooted real tree is given by the $h$-cut of its original 
contour path. We provide two applications of this result. First,
we recover a famous result of 
Neveu and Pitman \cite{NP89}, which states 
that the $h$-trimming of a tree coded by a Brownian excursion  
is distributed as a standard binary tree. In addition,
we provide the joint distribution of this Brownian tree and its trimmed version
in terms of the local time of the two-sided reflection of its contour path. 
As a second application,
we relate the maximum of a sticky Brownian motion
to the local time of its driving process.
\end{abstract}

\section{Introduction and Main Results.}
\label{Intro}

In a rooted tree, there is a natural partial ordering
on the set of vertices -- $x \preceq y$ iff the unique path
from the root to vertex $y$ passes through vertex $x$. 
Under this ordering, the children of a given node are not ordered. 
However, one can always specify some arbitrary
ordering of the children of each vertex of the tree (from left to right) 
and by doing so,
one defines an object called a rooted {\it plane} tree -- see Le Gall \cite{LG05} for a 
formal definition.

Every rooted plane tree can be encoded
by its contour path, where the contour path 
can be loosely understood
by envisioning the tree as embedded in
the plane, 
with each of its edges 
having unit length.
We can then imagine  
a particle starting
from the root,
traveling
along the edges of the tree
at speed $1$ and exploring 
the tree from left to right --- see Fig \ref{fig1}. 
The contour path of the tree
is simply defined as the current distance
of the exploration particle to the root --- see Fig \ref{fig2}. 

In this paper, we show that the contour path of the 
$h$-trimming of a rooted plane tree 
(and more generally the $h$-trimming of rooted real trees)
is given by the $h$-cut of the 
original contour path; where the $h$-cut is constructed from the two-sided Skorohod reflection
of the original contour path -- see (\ref{h-truncation}).


\bigskip

{\bf Real rooted  trees.} As already discussed, every rooted plane tree can be encoded by
its contour path which is a function in $C_0^+(\R^+)$
-- the set  of continuous non-negative functions on $\R^+$ with $f(0)=0$ and compact support.
Conversely, it is now well established that any  $f\in C_0^+(\R^+)$
encodes a {\it real rooted tree} in the following natural way -- see again \cite{LG05}
for more details. 
Define 
$$
\forall s,t \in \R^+, \ \ d_f(s,t) = f(s) + f(t) - 2 \inf_{[s\wedge t,s\vee t]} f, 
$$
and the equivalence relation $\sim$ on $\R^+$ as follows
$$
s \sim t \Longleftrightarrow d_f(s,t) = 0.
$$
The equivalence relation $\sim$ defines a quotient space
$$
{\cal T}_f \ = \ \R^+ / \sim
$$
referred to as the tree encoded by $f$. 
The function $d_f$ induces a distance on ${\cal T}_f$, and we keep the notation 
$d_f$ for this distance. In \cite{LG05}, it is shown that the pair $({\cal T}_f,d_f)$
defines a real tree in the sense 
that the two following properties are satisfied. For every $a,b\in {\cal T}_f$:
\begin{enumerate}
\item[(i)]{\bf (Unique geodesics.)}  There is a unique isometric map $\psi^{a,b}$ from $[0,d_f(a,b)]$ into ${\cal T}_f$ such that $\psi^{a,b}(0) = a$ and $\psi^{a,b}({d_f(a, b)}) = b$.
\item[(ii)]{\bf (Loop free.)}   If $q$ is a continuous injective map from $[0, 1]$ into ${\cal T}_f$ , such that $q(0) = a$ and $q(1) = b$, we have
$q({[0, 1]}) = \psi^{a,b}({[0, d_f(a, b)]})$.
\end{enumerate}
In the following, for any $x,y\in{\cal T}_f$, $[x,y]$
will denote the geodesic from $x$ to $y$, i.e., $[x,y]$
is the image of $[0,d_f(x,y)]$ by $\psi^{x,y}$. We will denote by $p_f$ the canonical projection from ${\R}^+$
to ${\cal T}_f$ which can be thought of
as the position of the exploration particle
at time $t$. In the following, 
$\rho_f=p_f(0)$ will be referred to as the root of the tree ${\cal T}_f$.
In what follows, real trees will always be rooted, even if this is not mentioned explicitly.

$d_f$ induces a natural partial ordering on the rooted tree ${\cal T}_f$ :  
$v' \preceq v$ ($v'$ is an ancestor of $v$)  iff 
$$
d_f(v,v') \ = \ d_f(\rho_f,v) - d_f(\rho_f,v').  
$$
We note that this partial ordering is directly related to the sub-excursions nested in the function $f$. Indeed,
for any $s,t\geq0$,
$p_f(t) \preceq p_f(s)$ if and only if $\inf_{[t\wedge s, t\vee s]} f = f(t)$, which is equivalent
to saying that $t$ is the ending time or starting time of a sub-excursion of $f$ starting from level $f(t)$
and straddling time $s$ -- see Fig \ref{fig1} and \ref{fig2}.

Finally,
for any $x,y\in \cT_f$,
 the most recent common ancestor of $x$ and $y$ -- denoted by  $x\wedge y$ -- 
is defined as $\sup\{ z\in \cT_f : z \preceq x,y \}$. From the definition of our genealogy,
for any $t_1,t_2\in\R^+$, we must have
\be\label{ionf}
p_f(t_1)\wedge p_f(t_2) =  p_f(s), \ \ \mbox{for any $s\in\mbox{argmin}_{[t_1\wedge t_2,t_1 \vee t_2]} f$,}
\ee
with the height of the most recent common ancestor being given by $f(s)=\min_{[t_1\wedge t_2, t_1\vee t_2]} f$. 

\bigskip

\begin{figure}[ht]
\begin{minipage}[b]{0.5\linewidth}
\centering
\includegraphics[scale=0.2]{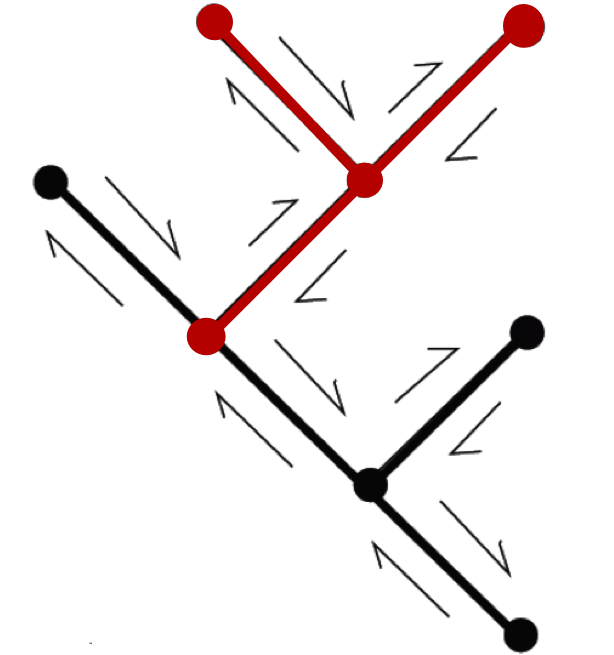}
\caption{{\it Exploration of a plane tree. The exploration particle travels 
along each branch twice : first on the left and away from the root, and then on the right
and towards the root. The root of the red sub-tree belongs to the $2$-trimming of the tree.}}
\label{fig1}
\end{minipage}
\hspace{0.5cm}
\begin{minipage}[b]{0.5\linewidth}
\centering
\includegraphics[scale=0.2]{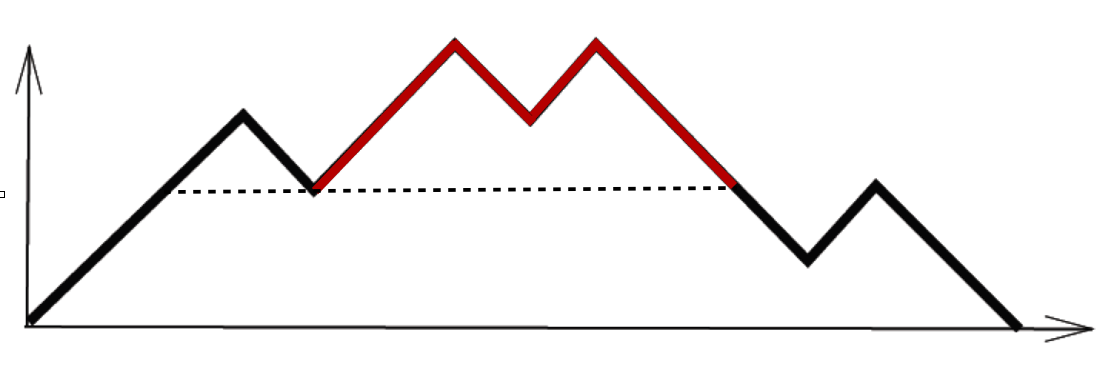}
\caption{{\it Contour path. The red portion of the curve 
is a sub-excursion of height 2 corresponding to the exploration of the red sub-tree 
on the left panel.}}
\label{fig2}
\end{minipage}
\end{figure}

\bigskip

{\bf Trimming and the two-sided Skorohod reflection.} As in Evans \cite{E05}, for every $h>0$,
$({\cal T}_f,d_f)$
as the (possibly empty) sub-tree
\be\label{def-tr}
\mbox{Tr}^h( {\cal T}_f ) := \{ x\in{\cal T}_f \ : \ \sup_{y\in {\cal T}_f \ : \ y \succeq x} d_f(x,y) \geq h \},
\ee
which consists of all the points in ${\cal T}_f$
having at least one leaf above them at distance greater or equal to $h$.  (Note that 
$\mbox{Tr}^h( {\cal T}_f )$  is not empty if and only if $\sup_{[0,\infty)} f\geq h$.) 
As already mentioned, one of the main results of this paper is the relation between
the $h$-trimming of a real rooted tree and  
the two-sided Skorohod reflection of its contour path. 
The one-sided Skorohod reflection is well known
among probabilists. Given
a continuous function $f$ starting from $x\geq0$,
it is simply defined as
the following transformation
\be
\Ga^{0}(f)(t) \ = \  f(t) \ - \ \inf_{[0,t]} f \wedge 0. \label{one-sided}
\ee
The resulting path obviously remains non-negative and 
the function
$c(t)\equiv-\inf_{[0,t]} f$ is easily
seen
to be the unique solution 
of the so-called (one-sided) Skorohod equation, i.e., $c$ is the continuous function $c$ on $\R^+$ such that $c(0)=0$
and
\begin{enumerate}
\item $\Ga^0(f)(t) := f(t) + c(t)$ is non-negative.
\item $c$ is non-decreasing.
\item $c$ does not vary off the set $\{t \ : \ \Ga^0(f)(t) = 0 \}$, i.e.,
the support
of the measure $d c$ is contained in $\Ga^{0}(f)^{-1}(\{0\})$ .
\end{enumerate}

See Lemma 6.17 in \cite{KS91} for a proof of this statement. 
Intuitively,
the solution $c$,
which will be 
referred to as the {\it compensator}
of the reflection in the rest of this paper, can be thought 
of as the minimal
amount of upward push that one needs to 
exert on the path $f$
to keep it away from negative values.  
The Skorohod equation states that the reflected path is completely driven
by $f$ when it is away from the origin, while it is repealed from negative values
by the compensator upon reaching level $0$.  
The following theorem
is a generalization of the Skorohod 
equation to the two-sided case.  

\begin{theorem}[{\bf Two-Sided Skorohod Reflection}]
\label{Skor}
Let $h\geq0$ and let $f$ be a continuous function with $f(0)\in[0,h]$. 
There exists a unique 
pair of
continuous functions $(c^0(f),c^h(f))$ with $c^0(f)(0)=c^h(f)(0)=0$ satisfying the three following properties.
\begin{enumerate}
\item $\La_{0,h}(f)(t) \ := \ f(t) + c^h(f)(t) + c^0(f)(t)$ is valued in $[0,h]$.
\item $c^0(f)$ (resp, $c^h(f)$) is a non-decreasing (resp., non-increasing) function.
\item  $c^0(f)$ (resp, $c^h(f)$) does not vary off the set $\La_{0,h}(f)^{-1}(\{0\})$ (resp., $\La_{0,h}(f)^{-1}(\{h\})$) 
\end{enumerate}
\end{theorem}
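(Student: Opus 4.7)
The plan is to prove uniqueness and existence separately, with uniqueness being by far the cleaner of the two parts.

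For uniqueness, I would use an energy-type argument. Suppose $(c^0_1, c^h_1)$ and $(c^0_2, c^h_2)$ are two solutions and set $\Lambda_i := f + c^0_i + c^h_i$. Then $\Lambda_1 - \Lambda_2 = (c^0_1 - c^0_2) + (c^h_1 - c^h_2)$ is a sum of continuous monotone functions, hence locally of bounded variation, so integration by parts yields
\[
(\Lambda_1(t) - \Lambda_2(t))^2 \;=\; 2\int_0^t (\Lambda_1-\Lambda_2)(s)\, d\bigl[(c^0_1-c^0_2)+(c^h_1-c^h_2)\bigr](s).
\]
I would then split the right-hand side into four integrals against $dc^0_1, -dc^0_2, dc^h_1, -dc^h_2$. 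By condition (3) each of these measures is concentrated on a level set $\{\Lambda_i = 0\}$ or $\{\Lambda_i = h\}$, and by condition (1) the other $\Lambda_j$ lies in $[0,h]$. For instance on the support of $dc^0_1$ one has $\Lambda_1-\Lambda_2 = -\Lambda_2 \leq 0$ while $dc^0_1 \geq 0$; the other three terms are non-positive by symmetric reasoning. Hence $(\Lambda_1-\Lambda_2)^2$ is non-increasing with initial value $0$, which forces $\Lambda_1 \equiv \Lambda_2$. For $h>0$ the level sets $\Lambda^{-1}(\{0\})$ and $\Lambda^{-1}(\{h\})$ are disjoint, so restricting $d(c^0_1+c^h_1) = d(c^0_2+c^h_2)$ to each gives $c^0_1 = c^0_2$ and $c^h_1 = c^h_2$ separately.

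For existence, I would iteratively apply the one-sided Skorohod reflection (\ref{one-sided}) together with its symmetric downward counterpart $\Gamma^h(g) := h - \Gamma^0(h-g)$. Starting from $\Lambda^{(0)} := f$, set $\Lambda^{(k+1)} := \Gamma^h(\Gamma^0(\Lambda^{(k)}))$, accumulating a non-decreasing compensator $c^{0,k}$ and a non-increasing compensator $c^{h,k}$, each satisfying the correct support condition at its own stage by the one-sided theory. The key quantitative step is to show that on any compact time interval the upward push added at step $k+1$ is dominated by the downward push added at step $k$ (and vice versa) by a factor strictly less than $1$: an upward violation at stage $k+1$ can only arise from the downward correction just performed, which in turn acted on an excess above $h$ created by the previous upward correction. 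This yields geometric decay and uniform convergence of $(\Lambda^{(k)}), (c^{0,k}), (c^{h,k})$ on compacts; the limits satisfy (1) and (2) by continuity, and (3) by a closedness argument applied to the supports.

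The main obstacle is precisely this geometric-decay estimate together with the passage of the support condition through the limit, since the supports $\{\Lambda^{(k)}=0\}$ and $\{\Lambda^{(k)}=h\}$ move with $k$ and must be shown to converge to $\{\Lambda=0\}$ and $\{\Lambda=h\}$ in a measure-theoretic sense. A cleaner alternative I would consider, if the iteration becomes too cumbersome, is to write down an explicit doubly-reflected formula in the spirit of Chaleyat-Maurel and El Karoui, expressing $c^0 - c^h$ as a nested sup/inf functional of $f$, and then verify the three defining conditions directly; combined with the uniqueness argument above this would yield the theorem without any iteration.
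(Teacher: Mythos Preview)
The paper does not supply its own proof of this theorem: immediately after the statement it simply says that existence and uniqueness ``follow directly from Lemma~2.1, 2.3 and 2.6 in Tanaka~\cite{T79}'', as observed in Kruk, Lehoczky, Ramanan and Shreve~\cite{KLRS07}. So there is no paper proof to compare against beyond these citations.

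Your uniqueness argument is correct and is precisely Tanaka's Lemma~2.1: squaring the difference of the two reflected paths, integrating by parts against the bounded-variation compensator difference, and using the support/sign conditions to show each of the four resulting integrals is non-positive. The separation of $c^0$ from $c^h$ once $\Lambda_1=\Lambda_2$ is also standard, and your restriction to $h>0$ there is appropriate (for $h=0$ the statement degenerates).

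Your existence proposal is where you diverge from the cited literature, and it is also where there is a soft spot. The alternating one-sided reflections $\Lambda^{(k+1)}=\Gamma^h(\Gamma^0(\Lambda^{(k)}))$ do converge for continuous $f$, but not by a uniform geometric contraction factor as you describe: there is no constant $\rho<1$, depending only on $h$ and the modulus of continuity of $f$, such that the push at step $k+1$ is at most $\rho$ times the push at step $k$. The correct mechanism is rather that on any compact interval $[0,T]$, uniform continuity of $f$ gives a $\delta>0$ with oscillation less than $h$ on $\delta$-intervals, and then the iteration \emph{stabilises in finitely many steps} on $[0,T]$ (each full round of $\Gamma^0$ then $\Gamma^h$ permanently fixes the solution on an additional interval of length at least $\delta$). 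This finite-termination argument is what actually drives convergence and makes the limiting support condition immediate; your ``geometric decay'' heuristic would need to be replaced by it. The explicit sup/inf formula you mention as a fallback is exactly the route taken in~\cite{KLRS07} and is indeed the cleaner option, so your instinct there is right.
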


As noted by
Kruk, Lehoczky, Ramanan, and Shreve \cite{KLRS07}, existence and uniqueness to the Skorohod problem 
follow directly from Lemma 2.1, 2.3 and 2.6 in Tanaka \cite{T79}.
In the rest of this paper,
$\La_{0,h}(f)$ will be referred to as
the two-sided
Skorohod
reflection of the path $f$ on $[0,h]$,
while
the pair of functions $(c^0(f),c^h(f))$ will be referred to
as the compensators associated with the function $f$. 
In the same spirit as the one-sided 
reflection,
the compensator $c^h(f)$ (resp., $c^0(f)$) can be thought of as
 the minimal amount
of downward (resp., upward) push at level $h$ (resp., $0$) that one has to exert on $f$ to keep the path 
$\La_{0,h}(f)$ inside the interval $[0,h]$. In other words,
adding the compensators $c^0(f)$ and $c^h(f)$ to $f$
is the ``laziest way'' of keeping $f$ in the interval $[0,h]$. 


\bigskip

Let $f$ be a continuous function on $\R^+$
with $f(0)=0$ (with no restriction on the support and on the sign of $f$). 
For such a function, define the $h$-cut of the function $f$ as
\be\label{h-truncation}
f_h  \ := \ f-\Lambda_{0,h}(f) = -c^0(f) - c^h(f).
\ee
$f_h$ is
also characterized by an interesting variational property. Indeed,
combining Proposition 2 in Mil{\l}o\'s \cite{M13} and Corollary 3.12 in 
{\L}ochowski \cite{L13}, we get that
for every interval $[0,t]$,
$f_h$ is the unique solution of the minimization problem
\be\label{h-truncation2}
\mbox{argmin}_{g \ : \ g(0)=0, ||f-g||_{osc,[0,t]} < h}  \  TV(g,[0,t]),
\ee
where $||h||_{osc,[0,t]} \ = \ \sup_{x,y\in[0,t]} \ |h(x)-h(y)|$
and $TV(g,[0,t])$ is the total variation of $g$ on the interval
$[0,t]$. In other words,
$f_h$ is the function of minimal total variation 
uniformly approximating the 
increments of $f$ with accuracy $h$.
\footnote{Again following \cite{M13}, the $h$-cut of the function $f$ is a translation of the so-called 
$h$-truncation
of $f$, as introduced and studied by {\L}ochowski  \cite{L11}, \cite{L13}-- see also {\L}ochowski and  Mi{\l}o\'s \cite{LM13}.}

%
Our main  theorem states that the contour path
of the  $h$-trimming of a tree is simply given by 
the $h$-cut of its original contour path.

\begin{theorem}\label{main1}
Let $f\in C_0^+(\R^+)$ and let us assume that the $h$-trimming
of ${\cal T}_f$ is not empty.  
\begin{enumerate}
\item The $h$-cut $f_h$
belongs to $C_0^+(\R^+)$. 
\item 
The $h$-trimming of the 
real tree $({\cal T}_f,d_f)$  is identical to the real tree $({\cal T}_{f_h},d_{f_h})$
(up to a root preserving isometry).
\end{enumerate}
\end{theorem}

To state our next result, we need to introduce some extra notations.
For a continuous function $f$ with $f(0)=0$, define $t_n(f)\equiv t_n$ (resp., $T_n(f)\equiv T_n$) to be the $n^{th}$ returning time at level $0$ (resp., $h$) of $\La_{0,h}(f)$ 
and $s_n(f)\equiv s_n$ to be the $n^{th}$ exit time at $0$ of $\La_{0,h}(f)$ as follows. $t_0=0$, and for $n\geq1$
\beqn
T_n & :=  \inf\{ u> t_{n-1} & : \La_{0,h}(f)(u)= h \} \nonumber \\
t_n & :=  \inf\{ u> T_{n}    & :    \La_{0,h}(f)(u)= 0   \} \nonumber \\
s_n & := \sup\{u\in[t_{n-1},t_{n}) & : \ \La_{0,h}(f)(u)=0 \} \label{def:T}
\eeqn 
with the convention that $\sup\{\emptyset\}, \inf \{\emptyset\}=\infty$.
Let $N_h(f)$ be the number of returns of $\La_{0,h}(f)$ to $0$, i.e.
\be
N_h(f) \ = \ \sup\{n: \ t_n < \infty\}. \nonumber
\ee
Finally, define
\beqn
\forall n\geq1, & X_n(f) = &  f_{h}(t_n) - f_h(s_{n}),  \nonumber \\
		      & Y_n(f) =   &  f_h(t_{n-1}) - f_h(s_{n}). \label{xy}
\eeqn
As we shall see below (see Theorem \ref{PN}(2)), when $f$ is a Brownian excursion,
the quantity $X_n(f)$ (resp., $Y_n(f)$) simply
coincides with the amount of Brownian local time 
accumulated by the reflected path $\La^{0,h}(f)$ at $h$ (resp., $0$) on the interval $[t_{n-1},t_n]$.

\begin{proposition}\label{algo} Let $f\in C_0^+(\R^+)$
and let us assume that the $h$-trimming
of ${\cal T}_f$ is not empty. 
The $h$-trimming of ${\cal T}_f$
is equal (up to a root preserving isometry) to the tree
generated inductively according to the following  algorithm -- see Fig \ref{my-tree}. 
\begin{enumerate}
\item[(Step 1.)] Start with a single branch of length $X_1$.
\item[(Step n, $n\geq2$)] If $n=N_h(f)$ stop. Otherwise, let $z_{n-1}$ be the tip of the $(n-1)^{th}$ branch. On the ancestral line $[\rho,z_{n-1}]$, graft a branch of length $X_{n}$
at a distance $Y_{n}$ from the leaf $z_{n-1}$. 
\end{enumerate}
\end{proposition}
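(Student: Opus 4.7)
The plan is to invoke Theorem \ref{main1} to identify the $h$-trimming of $\mathcal{T}_f$ with the real tree $(\mathcal{T}_{f_h}, d_{f_h})$, and then to show that the latter coincides, via a root-preserving isometry, with the tree produced by the algorithm. The engine of the proof is a careful description of $f_h = -c^0(f) - c^h(f)$ on each interval $[t_{n-1}, t_n]$. By the Skorohod conditions of Theorem \ref{Skor}, $dc^0$ is supported on $\Lambda_{0,h}(f)^{-1}(\{0\})$ and $dc^h$ on $\Lambda_{0,h}(f)^{-1}(\{h\})$. Since $s_n < T_n$, $\Lambda_{0,h}(f) < h$ on $[t_{n-1}, s_n]$, so $c^h$ is constant there; since $\Lambda_{0,h}(f) > 0$ on $(s_n, t_n)$ (by definition of $s_n$) and $c^0$ is continuous, $c^0$ is constant on $[s_n, t_n]$; and on the middle segment $[s_n, T_n]$ both compensators are constant. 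Hence $f_h$ is non-increasing on $[t_{n-1}, s_n]$ with total decrease $Y_n = c^0(s_n) - c^0(t_{n-1})$, constant on $[s_n, T_n]$ at the value $f_h(s_n) = f_h(t_{n-1}) - Y_n$, and non-decreasing on $[T_n, t_n]$ with total increase $X_n = c^h(s_n) - c^h(t_n)$; in particular $f_h(t_n) = f_h(t_{n-1}) - Y_n + X_n$. Past $t_{N_h(f)}$, $\Lambda_{0,h}(f)$ cannot reach $h$ again (otherwise $-c^h$ would continue to grow and $f_h$ would fail to be compactly supported, contradicting Theorem \ref{main1}), so $c^h$ is constant and $f_h$ monotonically decreases to $0$.

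It follows that the local maxima of $f_h$ are precisely $\{t_1, \ldots, t_{N_h(f)}\}$ (at heights $f_h(t_n)$) and that $\min_{[t_k, t_n]} f_h = \min_{k < j \le n} f_h(s_j)$ whenever $k < n$. Via (\ref{ionf}), the most recent common ancestor of $p_{f_h}(t_k)$ and $p_{f_h}(t_n)$ in $\mathcal{T}_{f_h}$ therefore sits at height $\min_{k < j \le n} f_h(s_j)$; in particular this height is at most $f_h(s_{n+1})$ for every $k < n+1$, with equality when $k = n$. Writing $z_k$ for the tip of the $k$-th branch produced by the algorithm and setting $z_0 := \rho$, I argue by induction on $n$ that the subtree $B_n := \bigcup_{k=0}^n [\rho_{f_h}, p_{f_h}(t_k)]$ is isometric, via a root-preserving isometry sending $z_k \mapsto p_{f_h}(t_k)$ for $0 \le k \le n$, to the tree $A_n$ produced after step $n$ of the algorithm. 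The base case $n = 1$ follows from $d_{f_h}(\rho_{f_h}, p_{f_h}(t_1)) = f_h(t_1) - 2 \inf_{[0, t_1]} f_h = X_1$, using $f_h \ge 0$ and $f_h(0) = 0$.

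For the inductive step $n \to n+1$, the MRCA computation above shows that the highest point of $B_n$ lying on the geodesic $[\rho_{f_h}, p_{f_h}(t_{n+1})]$ sits on $[\rho_{f_h}, p_{f_h}(t_n)]$ at height $f_h(s_{n+1}) = f_h(t_n) - Y_{n+1}$, i.e., at distance $Y_{n+1}$ from $p_{f_h}(t_n)$, while the new edge attaching $p_{f_h}(t_{n+1})$ has length $f_h(t_{n+1}) - f_h(s_{n+1}) = X_{n+1}$. These are precisely the grafting data of step $n+1$, so the isometry extends. Finally, $B_{N_h(f)} = \mathcal{T}_{f_h}$ since every point of a real tree lies on some root-to-leaf geodesic, and every leaf of $\mathcal{T}_{f_h}$ is of the form $p_{f_h}(t_n)$ for some $1 \le n \le N_h(f)$ (the local maxima of $f_h$). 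The main obstacle is the structural analysis of the first paragraph, in particular verifying that $dc^0$ assigns no mass to the boundary point $t_n$ (so that $c^0$ is genuinely constant on the closed interval $[s_n, t_n]$) and that $f_h$ monotonically decreases back to $0$ past $t_{N_h(f)}$; both points rely on the continuity of the compensators together with the compact-support conclusion of Theorem \ref{main1}.
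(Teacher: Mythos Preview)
Your proof is correct and follows essentially the same route as the paper: reduce to $\mathcal{T}_{f_h}$ via Theorem~\ref{main1}, use the piecewise monotone structure of $f_h$ on the intervals $[t_{n-1},s_n]$ and $[s_n,t_n]$ to locate leaves and branching points, and then build the tree inductively branch by branch.

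The one noteworthy difference is how the monotonicity of $f_h$ is obtained. The paper packages this as item~(3) of Proposition~\ref{rtt}, which is proved by an inductive argument that repeatedly decomposes the reflection via Lemmas~\ref{two-steps-ref} and~\ref{pr-s} (passing through the one-sided reflections $\Gamma^0,\Gamma^h$). You instead read the monotonicity directly off the Skorohod conditions of Theorem~\ref{Skor}: since $s_n<T_n$, the support constraints on $dc^0$ and $dc^h$ immediately force $c^h$ constant on $[t_{n-1},s_n]$ and $c^0$ constant on $[s_n,t_n]$, hence $f_h=-c^0-c^h$ is monotone on each piece. This is shorter and more transparent for the purposes of Proposition~\ref{algo}; the paper's longer route through Proposition~\ref{rtt} is not wasted, however, since that proposition also establishes the identifications $t_n=\tau_n$, $s_n=\sigma_n$ needed in the proof of Theorem~\ref{main1} itself. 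Your inductive setup (comparing $B_n=\bigcup_{k\le n}[\rho_{f_h},p_{f_h}(t_k)]$ with the algorithm's $A_n$ via MRCA heights) is equivalent to the paper's description of $\mathcal{T}_{f_h}^{n+1}\setminus\mathcal{T}_{f_h}^{n}$ as a single branch attached at $p_{f_h}(\sigma_{n+1})$.
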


\begin{figure}[ht]
\centering
\includegraphics[scale=0.3]{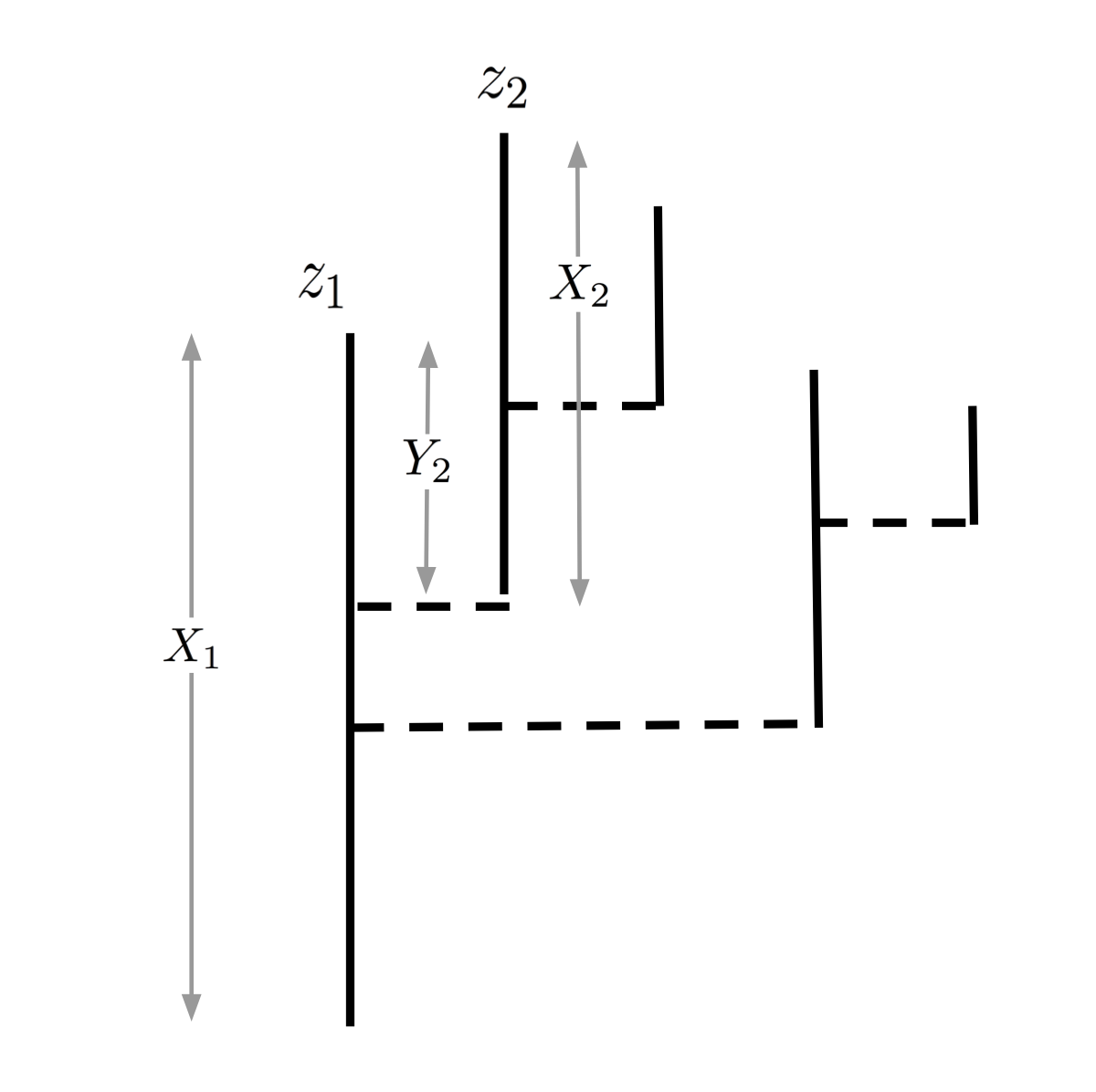}
\caption{{\it Schematic representation of the algorithm generating the $h$-trimming of a tree
from the two-sided reflection of its contour path.}}
\label{my-tree}
\end{figure}

{\bf Relation with standard binary trees.} 
Recall that  standard binary trees have branches 
(1) that have i.i.d. exponential life time with mean $\alpha$, and
(2) when they die, they either give birth to two new branches, or have no offspring with equal probability $1/2$.
The algorithm described in Proposition \ref{main1} is reminiscent of a classical construction of standard 
binary trees (see e.g., \cite{LG89}),
for which $\{(X_n(f),Y_n(f))\}$ are replaced with an infinite sequence of independent exponential r.v.'s $\tilde X_1, \tilde Y_2, \tilde X_2, \tilde Y_3,\cdots$
with parameter $\alpha$ and the algorithm stops at step $\tilde N$, with
\be
\tilde N := \inf\{n \ : \ \sum_{i=1}^{n} (\tilde X_i-\tilde Y_{i+1}) < 0 \}. 
\ee
(Note that this stopping condition is quite natural:  
the quantity 
$\sum_{i=1}^{n} (\tilde X_i-\tilde Y_{i+1})$ 
is the height of the $n^{th}$ intercalated branching point. 
We stop the algorithm once the branching point has negative height.)
Using Proposition \ref{algo},
we easily recover a result due to 
Neveu and Pitman \cite{NP89}, relating 
the $h$-trimming of the tree encoded by a Brownian excursion
with standard binary trees (see item 1. in the following theorem).
Further, the next theorem provides the joint distribution of the tree ${\cal T}_e$
and its trimmed version  $\Tr({\cal T}_e)$ (see item 2).
In the following, we 
define 
\beqn\label{local-time}
l^h(w)(t) & := &  \lim_{\eps\daw0}\frac{1}{2\eps} |\{s\in[0,t] \ : \ \Lambda_{0,h}(w)(s)\in[h-\eps,h] \}| \nonumber \\
l^0(w)(t) & := &  \lim_{\eps\daw0}\frac{1}{2\eps} |\{s\in[0,t] \ : \ \Lambda_{0,h}(w)(s)\in[0,\eps] \}|,\
\eeqn
provided that those limits exist. $l^h(w)$ (resp., $l^0(w)$) will be
referred to as the local time of $\La^{0,h}(w)$ at $h$ (resp., at $0$).

\begin{theorem}\label{PN}
Let $e$ be a Brownian excursion conditioned on having a height larger 
than $h$. 
\begin{enumerate}
\item The $h$-trimming of the tree $(\cT_e,d_e)$
is a standard binary tree with parameter $\alpha=h/2$.
\item  For $1 \leq i\leq N_h(e)$,
\begin{itemize}
\item  $X_i(e)$ a.s. coincides with the local time of $\La_{0,h}(e)$ at $h$ accumulated between $[t_{i-1}(e),t_{i}(e)]$, i.e., 
$X_i(e)=l^h(w)(t_i)-l^h(w)(t_{i-1})$. 
\item  $Y_i(e)$ a.s. coincides with the local time of $\La_{0,h}(e)$ at $0$ accumulated between $[t_{i-1}(e),t_{i}(e)]$.
\end{itemize}
\end{enumerate}
\end{theorem}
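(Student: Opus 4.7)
My plan is to prove part (2) first, as it is the foundational identification, and then deduce part (1) by combining it with Proposition \ref{algo} and the strong Markov property of Brownian motion.

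For part (2), the key step is to identify the Skorohod compensators $c^0(e)$ and $-c^h(e)$ with the local times $l^0$ and $l^h$ defined in (\ref{local-time}). Since $e$ is a Brownian excursion, the path $\Lambda_{0,h}(e) = e + c^0(e) + c^h(e)$ is a continuous semimartingale valued in $[0,h]$, and I would apply Tanaka's formula at each of the two boundaries. Combined with the fact that $c^0$ grows only on $\{\Lambda_{0,h}(e) = 0\}$ and $c^h$ grows only on $\{\Lambda_{0,h}(e) = h\}$, this yields $c^0(e)(t) = l^0(e)(t)$ and $-c^h(e)(t) = l^h(e)(t)$ almost surely. Given this identification, the claim of part (2) follows by expanding $X_i = f_h(t_i) - f_h(s_i)$ via $f_h = -c^0 - c^h$ and using two observations: on $(s_i, t_i)$ the reflected path is strictly positive, so $l^0$ is constant there; on $(t_{i-1}, s_i)$ the reflected path has not yet reached $h$ (because the first hitting time $T_i$ of $h$ after $t_{i-1}$ exceeds $s_i$), so $l^h$ is constant there. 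This immediately gives $X_i = l^h(e)(t_i) - l^h(e)(t_{i-1})$, and a symmetric computation yields $Y_i = l^0(e)(t_i) - l^0(e)(t_{i-1})$.

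For part (1), by Proposition \ref{algo} the $h$-trimming is generated from $(X_n, Y_n)_{n \geq 1}$ together with $N_h(e)$, so it suffices to prove that the joint law of these variables matches the one used to construct a standard binary tree of parameter $h/2$. My approach is iterated use of the strong Markov property of Brownian motion at the returns $t_{n-1}$. At each such time $\Lambda_{0,h}(e)(t_{n-1}) = 0$, and by part (2) the underlying excursion satisfies
\[
e(t_{n-1}) = l^h(e)(t_{n-1}) - l^0(e)(t_{n-1}) = \sum_{i=1}^{n-1}X_i - \sum_{i=1}^{n-1}Y_i,
\]
which tracks the effective height remaining in the excursion. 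Conditional on this value, the post-$t_{n-1}$ piece of $e$ is a Brownian motion starting at that height, run until it first hits $0$. Consequently the completed cycles $(X_n, Y_n)$ are i.i.d., and applying the strong Markov property once more at $T_n$ (the first time $\Lambda_{0,h}(e)$ reaches $h$ within the $n$-th cycle) shows that $X_n$ and $Y_n$ are independent within a single cycle. The marginal law of one increment is then a classical first-passage computation for reflected Brownian motion on $[0,h]$: namely, the local time at $0$ accumulated before the first hitting of $h$. A short martingale argument together with It\^o excursion theory at the level $0$ gives that this local time is exponentially distributed with mean $h/2$ under the normalization of (\ref{local-time}); by the symmetric role of the two boundaries, the same law holds for $X_n$. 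Finally, the stopping rule of the algorithm matches the one used for the standard binary tree because the algorithm terminates exactly when the excursion reaches $0$, which coincides with the effective height $\sum_{i=1}^n(X_i - Y_{i+1})$ becoming negative.

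The main technical obstacle is the clean identification of the Skorohod compensators with the one-sided boundary local times (\ref{local-time}): the factor $1/(2\eps)$ in their definition is tuned precisely so that Tanaka's formula at a reflecting boundary yields these identifications without spurious factors of $2$. Once this normalization is nailed down, the remainder of the argument is a routine application of the strong Markov property and excursion theory for reflected Brownian motion, with the cycle-by-cycle structure inherited from Proposition \ref{algo} converting the Brownian estimates directly into the standard binary tree distributional statement.
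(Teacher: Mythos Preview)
Your proposal is correct. Part~2 is essentially identical to the paper's argument: both identify the compensators $(c^0,-c^h)$ with the boundary local times of $\La_{0,h}(e)$ and then use the support properties of $c^0,c^h$ (namely $c^0$ constant on $(s_i,t_i)$, $c^h$ constant on $[t_{i-1},s_i]$) to obtain $X_i=l^h(t_i)-l^h(t_{i-1})$ and $Y_i=l^0(t_i)-l^0(t_{i-1})$.

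Part~1 is where you diverge. Rather than iterating the strong Markov property inside the excursion, the paper concatenates $e$ with an independent Brownian motion to form $\tilde w$, so that $(\tilde X_n,\tilde Y_n):=(X_n(\tilde w),Y_n(\tilde w))$ is automatically an \emph{infinite} i.i.d.\ sequence of exponentials of mean $h/2$ (by standard excursion theory for reflected Brownian motion on $[0,h]$); since $e$ and $\tilde w$ agree on the support of $e$, one has $(X_i(e),Y_i(e))=(\tilde X_i,\tilde Y_i)$ for $i\le N_h(e)$, and a short computation (Lemma~\ref{l2}) then yields $N_h(e)=\inf\{n:\sum_{i\le n}(\tilde X_i-\tilde Y_{i+1})<0\}$. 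The payoff of this global coupling is that the variable $Y_{N_h(e)+1}$ appearing in the stopping rule is a genuine exponential random variable, whereas in your direct approach it is not available inside $e$ (the excursion has already died); your final sentence about ``the effective height $\sum_{i=1}^n(X_i-Y_{i+1})$ becoming negative'' tacitly presupposes exactly such an extension. Your cycle-by-cycle Markov argument is valid, but to make the termination step rigorous you would have to either extend the process at the last cycle or verify directly that $\P(N_h(e)=n\mid {\cal F}_{t_n})=\P(\mathrm{Exp}(h/2)>e(t_n))$ and match this to the law of $\tilde N$; the paper's coupling does this in one stroke.
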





\bigskip

{\bf The maximum of a sticky Brownian motion.} 
Our final application of Theorem \ref{main1}
relates to the sticky Brownian motion.
Given a filtered probability space
$(\Omega,{\cal G}, \{{\cal G}_t\}_{t\geq0},\P)$,
a sticky Brownian motion, with parameter 
$\theta>0$,
is defined as the adapted process
taking value on $[0,\infty)$ 
solving the following stochastic
differential equation (SDE):
\beqn
\label{sticky-bm}
d z^\theta(t) \ = \ 1_{z^\theta(t)>0} d w(t) \  + \ \theta \ 1_{z^\theta(t)=0} d t, 
\eeqn
where $(w(t); \ t \geq0)$ is a standard ${\cal G}_t$-Brownian motion. 
Intuitively, $z^\theta$ is driven by $w$ away from level $0$,
and gets an upward push upon reaching this level,
keeping the process away from negative values. 
Sticky Brownian motion were first investigated by Feller \cite{F57}
on strong Markov processes taking
values in $[0,\infty)$ that behave like Brownian motion away from 0.
We refer the reader to Varadhan
\cite{V01} for a good introduction on this object.

Ikeda and Watanabe showed that (\ref{sticky-bm})
admits a unique weak solution. The result was later
straightened by Chitashvili \cite{C89} and 
Warren \cite{W99} 
who showed that
$z^\theta$ is not measurable with respect to $w$ and that, in order to construct
the process $z^\theta$, one needs to add some extra randomness
to the driving Brownian motion $w$. In \cite{W02},
Warren did exhibit this extra randomness 
and showed that it can be expressed
in terms of a certain marking procedure 
of the random tree induced by the reflection of the
driving Brownian motion $w$ (more on that in Section \ref{proof-sect}).

Among the first applications related to
sticky Brownian motions, we cite Yamada \cite{Y94} and Harrison and Lemoine \cite{HL81} 
who
studied sticky random walks as the limit of storage processes. 
More recently, Sun and Swart \cite{SS08} 
introduced a new object called the Brownian net 
which can be thought of as an infinite 
family of one-dimensional coalescing branching Brownian motions
and in which sticky Brownian motions play an essential role (see also Newman, Ravishankar and Schertzer \cite{NRS10}). 

Building on the approach of Warren \cite{W02},
and using Theorem \ref{main1}, we will show that
the law of the maximum of a sticky Brownian motion can be expressed in
terms of the local time of the two-sided reflection of its driving Brownian motion  $w$ on the interval $[0,h]$.

In the following,
$\lambda_{0,h}(\cdot)$ will refer to the linear function reflected at $0$ and $h$, i.e., the function obtained by a linear interpolation of
the points 
$\{(2n \cdot h,0)\}_{n\in\Z}$ and $\{((2n+1) \cdot h),h\}_{n\in\Z}$. For any 
continuous function $f$,
the {\it standard} reflection of $f$ on $[0,h]$ (as opposed to the two-sided Skorohod reflection) will refer to the transformation $\lambda_{0,h}(f)$. In \cite{W99}, the one-dimensional 
distribution
of a sticky Brownian motion 
conditionally on its driving process was given. 
The following theorem
provides the one-dimensional distribution 
of the maximum of a sticky Brownian motion
conditionally on its driving process.

\begin{theorem}
\label{teo1}
Let $h>0$ and let $(z^\theta(t),w(t); t\geq0)$ be a weak solution of equation (\ref{sticky-bm})
starting at $(0,0)$. 
$\Lambda_{0,h}(w)$ is distributed as a Brownian motion reflected (in the standard way) on  $[0,h]$. 
Furthermore,
\beqnn
\P\left( \max_{[0,t]} z^\theta\ \leq \ h \ | \ \sigma(w) \right) 
\ = \ \exp\left( -2\theta \ l^h(w)(t) \right), \ \ 
\eeqnn
where $l^h(w)$ is the local time at $h$ for the path $\Lambda_{0,h}(w)$ (see (\ref{local-time})).
\end{theorem}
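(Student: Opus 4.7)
The plan is to treat the two assertions separately. The first is a standard consequence of Skorohod theory for Brownian motion, while the second combines Theorem \ref{main1} with Warren's noise-based construction of $z^\theta$ from the driving process $w$.

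For the first assertion, the decomposition $\La_{0,h}(w) = w + c^0(w) + c^h(w)$ from Theorem \ref{Skor}, combined with the properties of $c^0(w)$ and $c^h(w)$, presents $\La_{0,h}(w)$ as a semimartingale taking values in $[0,h]$, with a continuous non-decreasing process pushing it away from $0$ and a continuous non-increasing process pushing it away from $h$, each supported on the relevant boundary level-set. This is precisely the Skorohod problem for Brownian motion on $[0,h]$, whose pathwise unique solution has the law of standard reflected Brownian motion; since $\lambda_{0,h}(B)$ is another realization of that law, the first claim follows. As a byproduct, $c^0(w)$ and $-c^h(w)$ coincide with the Brownian local times of $\La_{0,h}(w)$ at $0$ and $h$, so that the limits in (\ref{local-time}) defining $l^0(w)$ and $l^h(w)$ exist a.s.\ and agree with these compensators.

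For the second assertion, I would appeal to Warren's construction \cite{W02} realizing $z^\theta$ as a measurable functional of $w$ together with an independent Poisson point process $\Pi$ encoding the extra randomness (needed because $z^\theta$ is not $\sigma(w)$-measurable). The key features are that (i) on intervals where $z^\theta>0$ one has $dz^\theta = dw$, so every excursion of $z^\theta$ has the same shape as a shifted piece of $w$; and (ii) conditionally on $w$, the ``choice'' of which excursions of $w$ become excursions of $z^\theta$ is governed by $\Pi$ in a Poissonian way. Together (i)--(ii) imply that $z^\theta$ explores a (random) subtree of $\cT_w$, and $\{\max_{[0,t]} z^\theta \leq h\}$ is precisely the event that this subtree is disjoint from $\Tr(\cT_w)$.

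I would then quantify this event using Theorem \ref{main1} and the trimming viewpoint. By Theorem \ref{main1}, the trimmed tree $\Tr(\cT_w)$ is encoded by the $h$-cut $w-\La_{0,h}(w)$; and by the Brownian version of Theorem \ref{PN}(2), applied to each sub-excursion of $w$ reaching height $h$, the total branch-length of $\Tr(\cT_w)$ discovered up to contour time $t$ equals $l^h(w)(t)$. Combining with Warren's construction, the event that $z^\theta$ avoids $\Tr(\cT_w)$ up to time $t$ is, conditionally on $w$, the void probability of $\Pi$ on a $w$-measurable set whose intensity has total mass $2\theta\,l^h(w)(t)$. The formula
\[
\P\big(\max_{[0,t]} z^\theta \leq h \,\big|\, \sigma(w)\big) \;=\; \exp\!\big(-2\theta\, l^h(w)(t)\big)
\]
then follows.

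The main obstacle is the translation carried out in the last paragraph: one has to set up Warren's construction in the language of the tree $\cT_w$, identify the $w$-measurable ``opportunity set'' whose marking forces $z^\theta$ into $\Tr(\cT_w)$, and verify that the intensity of this set has total mass exactly $2\theta\, l^h(w)(t)$. This identification — which ties Warren's construction to the two-sided Skorohod reflection — is where Theorem \ref{main1} feeds into the argument, via the interpretation of $l^h(w)(t)$ as the accumulated length of the trimmed tree given by the Brownian analogue of Theorem \ref{PN}(2).
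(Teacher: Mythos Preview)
Your overall strategy matches the paper's: Warren's Poisson-marking construction reduces the conditional probability to a void probability on the $h$-trimmed tree, and Theorem~\ref{main1}/Proposition~\ref{algo} identify the relevant branch-length as a compensator, hence a local time. But there is a genuine gap in how you set this up.

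Warren's construction is on the tree $\cT_\xi$ of the \emph{one-sided reflection} $\xi=\Ga^0(w)=w-\inf_{[0,\cdot]}w$, not on a tree ``$\cT_w$''. The Brownian motion $w$ is signed, so $\cT_w$ is not even defined in the framework of the paper (contour functions must lie in $C_0^+(\R^+)$). Consequently the quantity that falls out of the void-probability computation is $-c^h(\xi)(t)$, the compensator at $h$ for the two-sided reflection of $\xi$, not of $w$. To reach the statement of the theorem you need the identity $c^h(\Ga^0(w))=c^h(w)$, which is the content of Lemma~\ref{reflection-inv} in the paper and is not entirely obvious: it uses that $f_h\ge 0$ for $f\ge 0$ (from Theorem~\ref{main1}) to show the extra push from the one-sided compensator only acts where $\La_{0,h}$ is already at $0$. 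Your proposal never makes this passage from $\xi$ to $w$, and invoking ``$\Tr(\cT_w)$'' and ``Theorem~\ref{PN}(2) applied to each sub-excursion of $w$'' papers over exactly this point.

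A second, smaller gap: to apply Theorem~\ref{main1} or Proposition~\ref{algo} at a fixed time $t$ you need a compactly supported non-negative contour function. The paper handles this by the stopped-and-closed path $\xi^{(t)}$ (linear run-down after time $t$), proves the resulting tree is isometric to the explored subtree $\cT_\xi^{(t)}$, and then checks that $c^h(\xi^{(t)})$ does not move after time $t$ so that the total branch length is exactly $-c^h(\xi)(t)$. Your appeal to Theorem~\ref{PN}(2) for this step is not quite right: that result is about a single Brownian excursion conditioned on height $\ge h$, whereas here one needs the deterministic branch-length computation $\sum_n X_n = -c^h$ coming from Proposition~\ref{algo}.
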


\section{Proof of Theorem \ref{main1} and Proposition \ref{algo}}

In the following, a nested sub-excursion of the function $f\in C_0^+(\R^+)$
will refer to any section of the path $f$ on an interval $[t_-,t_+]$
such that 
$\forall t\in[t_-,t_+], \inf_{[t_-,t]} f = f(t-)=f(t_+)$ -- see Fig \ref{fig2}. The height of such a sub-excursion is defined as $\max_{[t_-,t_+]} f - f(t_+)$.

Let $p_f$ be the canonical projection from ${\R}^+$
to ${\cal T}_f$, which can be thought of as
the position
of the exploration particle at time $t$.
By definition, $p_f(t)$ belongs to $\mbox{Tr}^h( {\cal T}_f )$ if and only if there exists 
$s$ such that $\inf_{[t\wedge s, t\vee s]} f = f(t)$ and $f(s)-f(t)\geq h$, which is equivalent
to saying that $t$ is the ending time or starting time of a sub-excursion (nested in $f$) of height at least $h$ starting from level $f(t)$. 
We claim that the extreme points
of $\Tr(\cT_f)$ -- or {\it leaves} -- are contained in the set of points of the form $z=p_f(t)$, where $t$
is the time extremity of a sub-excursion of height {\it exactly} $h$. 
In order to see that, 
let $t$ be the time extremity of a sub-excursion of height strictly larger   
than $h$.  By continuity, this sub-excursion must contain a sub-excursion
of height exactly $h$. Thus, $p_f(t)$ must have at least one  descendant and can not be a leaf. 
As claimed earlier, this 
shows that the leaves must be visited at 
the time extremities of some sub-excursion of height {\it exactly} $h$. 
\footnote{
Note that we only have an inclusion. For example, let $t$ be the starting time 
of a sub-excursion of height exactly $h$,
and let $t_e$ be the ending time of this excursion. It $t_e$ 
is the starting time of another sub-excursion of size $>h$, then
$p_f(t)$ is not a leaf of the $h$-trimming of the tree.}

Let us now define 
inductively $\{\tau_n(f)\}_{n\geq 0}\equiv\{\tau_n\}_{n\geq 0}$, 
$\{\theta_{n}(f)\}_{n\geq1}\equiv \{\theta_{n}\}_{n\geq1}$ and $\{\sigma_{n}(f)\}_{n\geq0}\equiv \{\sigma_{n}\}_{n\geq0}$
as follows : $\tau_0=0$, $\sigma_0=0$ and
\beqn
\theta_{n+1} \ = \ \inf\{s > \tau_n: \   f(s) -h =  \inf_{[\tau_n,s]} f\}.\nonumber \\
\tau_{n+1} \ = \ \inf\{s > \theta_{n+1} : \sup_{[\theta_{n+1},s]} f = f(t)+h\}, \nonumber \\
\sig_{n+1} \ = \ \sup\{s\in[\tau_n,\tau_{n+1}] :  f(s) =  \inf_{[\tau_n,s]} f\}. \label{t-s}
\eeqn
with the convention that $\inf\{\emptyset\},\sup\{\emptyset\}=\infty$. As already noted in Neveu and Pitman \cite{NP89} (although under a slightly different form), 
the sequences $\{\tau_n\}_{n\geq1}$ and $\{\sigma_{n}\}_{n\geq1}$ play a key role in the tree $\Tr(\cT_f)$, being respectively related to the exploration times for the leaves and the branching
points respectively.

First,
the reader can easily convince herself that the set of finite $\{\tau_n\}$ coincide
with 
the completion times of all the sub-excursions nested in the function $f$ which are exactly of height $h$
(see  \cite{NP89} for more details). 
As already discussed, 
this implies that $\{p_f(\tau_n)\}_{n\geq1}$
contains  the set of leaves of the tree $\Tr(\cT_f)$.

Secondly, the very definition of $\sigma_i$'s implies that for every $m<n$,  $\inf_{[\tau_m,\tau_{n}]} f = f(\sigma_{k})$
for some $k\in\{m,\cdots,n\}$. From
(\ref{ionf}), this implies that
$p_f(\tau_n) \wedge p_f(\tau_{m})$ -- the most recent common ancestor of $p_f(\tau_n)$ and $p_f(\tau_{m})$ -- is given by
some  $p_f(\sigma_{k})$.

\bigskip

We now show that the times $\sigma_n(f)$ and $\tau_n(f)$ also
appear quite naturally in the two-sided Skorohod reflection. Recall from the introduction that $t_n$ (resp., $s_n$)
refer to the $n^{th}$ returning time (exit time) of $\La_{0,h}(f)$ at level $0$ (see (\ref{def:T})).

\begin{proposition}
\label{rtt} For every continuous function $f$ with $f(0)=0$
and 
for every $n\geq 1$,
\begin{enumerate}
\item
$\tau_n(f)$ is the $n^{th}$ returning time to level 0 of $\La_{0,h}(f)$, i.e., $\tau_n(f)=t_n(f)$.
\item
$\sigma_n(f)$ is the  $n^{th}$ exit time at level 0 of  $\La_{0,h}(f)$, i.e.,  $\sigma_n(f)=s_n(f)$.                                                                        
\item The function $f_h=f-\La_{0,h}(f)$ is non-decreasing (resp., non-increasing) on $[\sigma_{n+1}(f),\tau_{n+1}(f)]$ (resp., on $[\tau_n(f),\sigma_{n+1}(f)]$).
In particular, $\{f_h(\sigma_i)\}$ (resp., $\{f_h(\tau_i)\}$) coincide with the local minima (resp., local maxima) of 
$f_h$.
\end{enumerate}                                                     
\end{proposition}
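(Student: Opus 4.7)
The plan is to argue by induction on $n$, using the uniqueness clause of Theorem \ref{Skor} to identify $\La_{0,h}(f)$ piecewise on consecutive intervals of the form $[\tau_n,\theta_{n+1}]$ and $[\theta_{n+1},\tau_{n+1}]$, then to glue the pieces together. The base case is immediate from $\tau_0=\sigma_0=0$ and $f(0)=0$. For the induction step, I would assume that $\La_{0,h}(f)(\tau_n)=0$ has already been established, propose explicit candidates for the compensators on the two sub-intervals, verify that they satisfy the three defining properties of the two-sided Skorohod problem, and conclude by uniqueness; all three parts of the proposition then fall out.

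Concretely, on $[\tau_n,\theta_{n+1}]$ I would keep $c^h$ constant and let $c^0$ increase by $f(\tau_n)-\inf_{[\tau_n,u]} f$, producing the candidate $\La_{0,h}(f)(u)=f(u)-\inf_{[\tau_n,u]} f$. By the definition of $\theta_{n+1}$, this expression lies in $[0,h]$ on the sub-interval, equals $0$ precisely at the times $u$ with $f(u)=\inf_{[\tau_n,u]} f$, and first reaches $h$ at $u=\theta_{n+1}$. Symmetrically, on $[\theta_{n+1},\tau_{n+1}]$ I would keep $c^0$ constant and let $c^h$ decrease by $\sup_{[\theta_{n+1},u]} f-f(\theta_{n+1})$, producing the candidate $\La_{0,h}(f)(u)=h-\bigl(\sup_{[\theta_{n+1},u]} f-f(u)\bigr)$, which stays in $[0,h]$ and first returns to $0$ exactly at $\tau_{n+1}$. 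In each case the proposed compensators are monotone of the prescribed sign, their support lies in the correct level set, and the resulting path is $[0,h]$-valued, so the uniqueness statement of Theorem \ref{Skor} forces them to coincide with $(c^0(f),c^h(f))$ on each piece.

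Parts 1 and 2 then follow directly: the returns of $\La_{0,h}(f)$ to $0$ are exactly the $\tau_n$, and since $\La_{0,h}(f)>0$ strictly on $(\theta_n,\tau_n)$ by the second formula, the last zero of $\La_{0,h}(f)$ on $[\tau_{n-1},\tau_n)$ lies in $[\tau_{n-1},\theta_n]$ at a time $u$ with $f(u)=\inf_{[\tau_{n-1},u]} f$, whose supremum is $\sigma_n$. For Part 3, subtracting the two explicit formulas from $f$ gives $f_h(u)=\inf_{[\tau_n,u]} f$ on $[\tau_n,\theta_{n+1}]$, which is non-increasing and, by definition of $\sigma_{n+1}$, constant on $[\sigma_{n+1},\theta_{n+1}]$; and $f_h(u)=\sup_{[\theta_{n+1},u]} f-h$ on $[\theta_{n+1},\tau_{n+1}]$, which is non-decreasing. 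The two expressions agree at $\theta_{n+1}$ since both evaluate to $f(\theta_{n+1})-h=f(\sigma_{n+1})$, so $f_h$ is non-increasing on $[\tau_n,\sigma_{n+1}]$ and non-decreasing on $[\sigma_{n+1},\tau_{n+1}]$, placing local minima at $\sigma_i$ and local maxima at $\tau_i$. The main obstacle I anticipate is the careful bookkeeping at the gluing interfaces $\tau_n$ and $\theta_{n+1}$—checking that the pieced-together compensators remain globally monotone with the correct support—together with a mild boundary subtlety in Part 2 when $\tau_n$ happens to satisfy $f(\tau_n)=\inf_{[\tau_{n-1},\tau_n]} f$; apart from these points the argument is a mechanical uniqueness-plus-explicit-formula computation.
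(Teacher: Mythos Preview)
Your proposal is correct and follows essentially the same route as the paper: induction on $n$, explicit piecewise formulas for $\La_{0,h}(f)$ given by the one-sided reflection at $0$ on $[\tau_n,\theta_{n+1}]$ and the one-sided reflection at $h$ on $[\theta_{n+1},\tau_{n+1}]$, followed by an appeal to uniqueness in the Skorohod problem. The only organizational difference is that the paper factors the gluing/localization step into two short preparatory lemmas---a ``restart'' identity $\La_{0,h}(f)(t)=\La_{0,h}\bigl(R^T(f)+\La_{0,h}(f)(T)\bigr)(t)$ for $t\ge T$, and a comparison showing $\La_{0,h}(F)=\Ga^h(F)$ whenever $\Ga^h(F)\ge 0$---and then reduces step $n+1$ to step $1$ applied to $R^{\tau_n}(f)$; you instead verify the three Skorohod conditions directly on the pieced-together compensators. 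The underlying computation and the resulting formulas are identical, and the boundary subtlety you flag is present in the paper's argument as well.
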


As we shall see, this proposition is a consequence of elementary results on the two-sided 
Skorohod reflection that we now expose.
We start by introducing some notations: 
$$
\forall f, \ \forall t>0, \ R^T(f)(t) \ := \ 1_{t\geq T}  \left(f(t) - f(T)\right).
$$
In other words, $R^T(f)$ is constant on $[0,T]$
and follows the variation of $f$ afterwards.
The next elementary lemma states that 
the reflection of a path
can be obtained by successively reflecting the path up to some $T$
and then reflecting the remaining portion of the path from $T$ to $\infty$.

\begin{lemma}\label{two-steps-ref}
For any continuous function $f$ with $f(0)\in[0,h]$ and $T\geq0$,
\beqnn
\forall  t < T , \ \ \La_{0,h}(f)(t) & = &  \La_{0,h}\left(f(\cdot\wedge T)\right)(t), \\
\forall t\geq T, \ \ \La_{0,h}(f)(t) & = &   \La_{0,h}\left(R^T(f) +  \La_{0,h}(f)(T) \right)(t). \label{wws}
\eeqnn
\end{lemma}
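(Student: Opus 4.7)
The plan is to invoke the uniqueness statement of Theorem~\ref{Skor}: for each of the two identities I will exhibit explicit candidate compensators for the right-hand side, verify the three Skorohod conditions, and then appeal to uniqueness. Both candidates will be built simply by truncating or shifting the compensators $(c^0(f),c^h(f))$ associated with $\Lambda_{0,h}(f)$.

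For the first identity, I take $\tilde c^0(t):=c^0(f)(t\wedge T)$ and $\tilde c^h(t):=c^h(f)(t\wedge T)$, so that
\[ f(t\wedge T)+\tilde c^0(t)+\tilde c^h(t)=\Lambda_{0,h}(f)(t\wedge T)\in[0,h]. \]
Continuity, the initial condition at $0$, and monotonicity are preserved by the composition with $t\mapsto t\wedge T$, and the measures $d\tilde c^0,d\tilde c^h$ are the original compensator measures restricted to $[0,T]$, hence remain supported on the level sets $\{t:\Lambda_{0,h}(f)(t\wedge T)=0\}$ and $\{t:\Lambda_{0,h}(f)(t\wedge T)=h\}$. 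Uniqueness then forces $\Lambda_{0,h}(f(\cdot\wedge T))(t)=\Lambda_{0,h}(f)(t\wedge T)$, which coincides with $\Lambda_{0,h}(f)(t)$ for $t<T$.

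For the second identity, set $g(t):=R^T(f)(t)+\Lambda_{0,h}(f)(T)$; this function is constant equal to $\Lambda_{0,h}(f)(T)\in[0,h]$ on $[0,T]$ and then mirrors the post-$T$ increments of $f$. I introduce the shifted compensators
\[ \tilde c^0(t):=c^0(f)(t\vee T)-c^0(f)(T),\qquad \tilde c^h(t):=c^h(f)(t\vee T)-c^h(f)(T). \]
Using $\Lambda_{0,h}(f)(T)=f(T)+c^0(f)(T)+c^h(f)(T)$, a direct computation gives $g(t)+\tilde c^0(t)+\tilde c^h(t)=\Lambda_{0,h}(f)(T)$ on $[0,T]$ and $\Lambda_{0,h}(f)(t)$ on $[T,\infty)$, so this sum is valued in $[0,h]$. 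Monotonicity, continuity, and the initial condition at $0$ are inherited; the supports of $d\tilde c^0,d\tilde c^h$ are those of $dc^0(f),dc^h(f)$ restricted to $[T,\infty)$, where $\Lambda_{0,h}(g)$ agrees with $\Lambda_{0,h}(f)$.

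Neither step is difficult; the mild point to track is the support condition for the shifted compensators in the second step, which follows from the fact that $\tilde c^0,\tilde c^h$ are flat on $[0,T]$ (hence trivially do not vary off any set) while $\Lambda_{0,h}(g)=\Lambda_{0,h}(f)$ beyond $T$, so the level-set conditions there reduce verbatim to those for $(c^0(f),c^h(f))$. Uniqueness then yields $\Lambda_{0,h}(g)(t)=\Lambda_{0,h}(f)(t)$ for $t\geq T$, completing the argument.
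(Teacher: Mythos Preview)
Your proof is correct and rests on the same mechanism as the paper's: build candidate compensators and appeal to the uniqueness part of Theorem~\ref{Skor}. The organization differs, however. The paper goes in the opposite direction: it takes the compensators of the two \emph{pieces}, namely $c^y(f(\cdot\wedge T))$ and $c^y(L^T(f))$ for $y=0,h$, adds them, and checks that the sum solves the Skorohod problem for the original function $f$; both identities then drop out simultaneously from the resulting formula for $G(t)$. You instead start from the known compensators $(c^0(f),c^h(f))$ of $f$ and truncate or shift them to produce compensators for each piece separately. Your route is slightly more direct---each identity is handled on its own with a one-line candidate---whereas the paper's version packages both identities into a single verification and, as a by-product, yields the explicit decomposition $c^y(f)=c^y(f(\cdot\wedge T))+c^y(L^T(f))$ of the compensators themselves.
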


\begin{proof}
In the following, we write
\beqnn
L^T(f)(t)	 & := & R^T(f)(t) +  \La_{0,h}(f)(T),
\eeqnn 
and for any continuous function $F$ with $F(0)\in[0,h]$, we denote by  $(c^0(F),c^h(F))$ the pair
of compensators solving the Skorohod equation for the two-sided reflection of $F$
on the interval $[0,h]$.
For $y=0,h$, define 
$$
\forall t\geq0,  \ \ \ \tilde c^y(t) \ : = \  \   c^y(f({\cdot\wedge T}))({t}) \ + \ c^y(L^T(f))(t).  
$$
We will show that  
$(\tilde c^0,\tilde c^h)$ solves the two-sided Skorohod equation  for $f$. 
We first need to prove that the function $G(t):=f(t) \ + \ \tilde c^h(t) + \tilde c^0(t)$
is valued in $[0,h]$. First
\begin{eqnarray*}
G(t)   & =  & \left(f({t\wedge T}) +   \sum_{y=0,h} c^y(f({\cdot \wedge T}))(t)\right)
      	\ + \ \left( L^T(f)(t) +  \sum_{y=0,h}  c^y(L^T(f))(t)  \right) - \La_{0,h}(f)(T)\\
     & = & \La_{0,h}(f({\cdot\wedge T}))(t) \ + \ \La_{0,h}\circ L^T(f)(t) \ - \La_{0,h}(f)(T) \\
  & = & \ \La_{0,h}(f)({t\wedge T}) \ + \ \La_{0,h}\circ L^T(f)(t) - \La_{0,h}(f)(T)
\end{eqnarray*}
where the first equality follows from the fact $f(t) = f({t\wedge T}) + L^T(f)(t) - \La_{0,h}(f)(T)$
and the last equality only states that the reflection of the function
$f({\cdot\wedge T})$ (the function $f$ ``stopped'' at $T$) 
is the reflection of $f$ stopped at $T$ (this can directly be checked 
from the definition
of the two-sided Skorohod reflection). 

The function $L^T(f)$ is constant and equal to $\La_{0,h}(f)(T)$
on the interval $[0,T]$. This easily implies that its reflection 
is also identically $\La_{0,h}(f)(T)$ on the same interval.
Thus, the latter equality implies that 
\beqn\label{htt}
 G(t)  := \left\{ \begin{array}{ll}
         \La_{0,h}(f)({t})                           & \mbox{if $t < T $},\\
        \La_{0,h}\circ L^T(f)(t)& \mbox{otherwise,}\end{array} \right. \label{barxi}
\eeqn

(\ref{htt}) implies that $G(t)$ 
belongs to $[0,h]$, hence proving that the first 
requirement of the Skorohod equation (see Theorem \ref{Skor}) is satisfied. 
The second requirement -- the function $\tilde c^{h}$ (resp., $\tilde c^0$) non-increasing (resp., non decreasing) --
is obviously satisfied since the function $\tilde c^h$ (resp., $\tilde c^0$) 
is constructed out of a compensator at $h$ (resp., at $0$).
Finally, for $y=0,h$, we need to show that the support of the measure
$d \tilde c^y$ is included in the set $G^{-1}(\{y\})$. In order to see 
that,
we use the fact that 
the support of the compensators $dc^{y}(L^T(f))$
and $d c^y(f({\cdot \wedge T}))$
are respectively included in $[T,\infty]$
and $[0,T]$ --- using the fact that if a function $g$ is constant 
on some interval, its compensator does not vary on this interval. 
As a consequence, for $y=0,h$
$$
\forall t\in [0,T], \ \   d \tilde c^y(t) \neq 0 \Longleftrightarrow d c^y(f({\cdot\wedge T}))(t) \neq 0.
$$
Further, $d c^y(f({\cdot\wedge T}))(t) \neq 0$  for $t$ such that $\La_{0,h}(f({\cdot\wedge T}))(t) = y$.
Since $\La_{0,h}(f({\cdot\wedge T}))$ and $G$ coincides on $[0,T]$ (by (\ref{barxi})),
we get that on $[0,T]$ the compensator $\tilde c^y_t$ only varies on  $G^{-1}(\{y\})$.
By an analogous argument, one can show that the same holds on the interval $[T,\infty]$.
Hence, 
the third and final requirement of the Skorohod equation holds for $\tilde c^y$, $y=0,h$.
This shows that $(\tilde c^0,\tilde c^h)$
solves the two-sided Skorohod reflection. Combining this with (\ref{barxi})
ends the proof of our lemma.

\end{proof}

Let $h\geq0$. 
For any continuous function $f$ with $f(0)\leq h$, let us define the one-sided reflection (with downward push) at $h$ -- denoted by $\Gamma^h(f)$ -- as
$$
\Ga^h(f) := f-(\sup_{[0,t]} f - h )\vee0.
$$
Along the same lines as the one-sided reflection at $0$
(as introduced in (\ref{one-sided})), the function $c(t)=-(\sup_{[0,t]} f - h )\vee0$
can be interpreted as
the minimal amount of downward push necessary to keep 
the path $f$ below level $h$. More precisely,
this function is easily seen to be the only continuous 
function $c$ with $c(0)=0$ satisfying the following requirements: (1) $f+c\leq h$, (2) c is non-increasing and, (3) $c$
does not vary off the set $\{t \ : \ f(t)+c(t) =h \}$. 

\begin{lemma}\label{pr-s}
For every $T\geq0$ and every continuous function $F$ 
with $F(0)\in[0,h]$ 
such that $\Ga^h(F)\geq 0$ (resp.,  $\Ga^0(F)\leq h$) on $[0,T]$, we must have
$$
\forall t\in[0,T], \ \  \Lambda_{0,h}(F)(t) \ = \ \Ga^h(F)(t) \ \ (\mbox{resp., }  \Lambda_{0,h}(F)(t) \ = \ \Ga^0(F)(t)).
$$
\end{lemma}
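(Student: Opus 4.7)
The plan is to verify, using the uniqueness part of Theorem \ref{Skor}, that $\Lambda_{0,h}(F)$ agrees with $\Gamma^h(F)$ (resp.\ $\Gamma^0(F)$) on $[0,T]$. Since Lemma \ref{two-steps-ref} applied to $F(\cdot\wedge T)$ shows that $\Lambda_{0,h}(F)$ on $[0,T]$ depends only on $F|_{[0,T]}$, it is enough to exhibit a pair of compensators $(c^0,c^h)$ solving the two-sided Skorohod problem for $F(\cdot\wedge T)$ such that $F+c^0+c^h=\Gamma^h(F)$ (resp.\ $=\Gamma^0(F)$) on $[0,T]$.

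For the first assertion, I would take $c^0\equiv 0$ and $c^h(t):=-(\sup_{[0,t]}F-h)\vee 0$, the standard one-sided compensator at $h$, so that $F+c^0+c^h=\Gamma^h(F)$. Condition~1 of Theorem \ref{Skor} then holds because $\Gamma^h(F)\leq h$ by the very definition of $\Gamma^h$ and $\Gamma^h(F)\geq 0$ on $[0,T]$ by the hypothesis of the lemma. Condition~2 is immediate, since the running supremum is non-decreasing (so $c^h$ is non-increasing) and $c^0\equiv 0$ is trivially non-decreasing. For Condition~3, the measure $dc^0$ vanishes and is thus vacuously supported where $\Gamma^h(F)=0$, while $c^h$ strictly decreases only at times $t$ with $F(t)=\sup_{[0,t]}F>h$, at which $F(t)+c^h(t)=h$, so the support of $dc^h$ lies inside $\{t:\Gamma^h(F)(t)=h\}$. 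Uniqueness in Theorem \ref{Skor} then forces $\Lambda_{0,h}(F)=\Gamma^h(F)$ on $[0,T]$.

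The symmetric statement reduces to the first by the substitution $G:=h-F$. A direct computation gives $\Gamma^h(G)=h-\Gamma^0(F)$, so the hypothesis $\Gamma^0(F)\leq h$ translates into $\Gamma^h(G)\geq 0$ on $[0,T]$; and the obvious symmetry of the two-sided Skorohod problem under the isometry $x\mapsto h-x$ of $[0,h]$ yields $\Lambda_{0,h}(G)=h-\Lambda_{0,h}(F)$. Combining these with the first case produces $\Lambda_{0,h}(F)=\Gamma^0(F)$ on $[0,T]$, as desired. I do not expect a real obstacle: the whole argument is a routine Skorohod-uniqueness check, and the only mildly subtle point — that verifying the three Skorohod conditions on $[0,T]$ suffices to pin down $\Lambda_{0,h}(F)$ on that interval — is precisely the locality statement already recorded in Lemma \ref{two-steps-ref}.
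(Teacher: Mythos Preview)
Your proposal is correct and follows essentially the same approach as the paper: exhibit $(c^0,c^h)=(0,\,-(\sup_{[0,\cdot]}F-h)^+)$, verify the three conditions of Theorem~\ref{Skor} on $[0,T]$, and invoke uniqueness. The only cosmetic differences are that you make the localization to $[0,T]$ explicit via Lemma~\ref{two-steps-ref} (the paper leaves this implicit), and you dispatch the second case by the reflection $G=h-F$ rather than repeating the argument, which the paper simply says ``can be handled similarly.''
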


\begin{proof}
Let us consider a continuous $F$ with $F(0)\in[0,h]$  and 
such that $\Ga^h(F)\geq 0$ on $[0,T]$.
Let us prove that $\La_{0,h}(F)=\Ga^{h}(F)$ on $[0,T]$. 
We aim at showing that $(0,-(\sup_{[0,t]}F-h)^+)$
coincides with the pair of compensators of $F$ on the time interval $[0,T]$. 
First, 
$$
\Ga^h(F) \ = \  f \ + \  0 \ + (-(\sup_{[0,t]}F-h)^+)
$$
belongs to $[0,h]$ since $\Ga^h(F)\leq h$ and under the conditions of our lemma $\Ga^h(F)\geq0$. Secondly, 
using the fact that $-(\sup_{[0,t]}F-h)^+$
is the compensator for the one-sided case (at $h$), this function is non-increasing and only decreases 
when $\Ga^h(F)$ is at level $h$. This shows that  $\Ga^h(F)$ coincides with 
the two sided reflection of $f$ 
on the interval $[0,h]$. The case  $\Ga^0(F)\leq h$ can be handled similarly.
\end{proof}

\begin{proof}[Proof of Proposition \ref{rtt}] In order to prove Proposition \ref{rtt}, we will now proceed by induction  on $n$. 

{\bf Step 1.} 
We first claim that $\sigma_1\leq \theta_1$. When $\theta_1=\infty$, this is obvious. Let us assume that $\theta_1<\infty$. 
In order to see that, let us assume that $\sigma_1>\theta_1$. The definition
of $\theta_1$ implies that 
$\Ga^0(f)(\theta_1)=h$ and thus $\theta_1$
belongs to an excursion of $\Ga^0(f)$ away from $0$ (of height at least $h$),
whose interval we denote by $[t_-,t_+]$. 
Since $\sigma_1$ was defined as the last visit at $0$
of $\Ga^0(f)$ before time $\tau_1$ (see (\ref{t-s})) and $\sigma_1$
is assumed to be greater than $\theta_1$, 
$\sigma_1\geq t_+$ and the excursion of $\Ga^0(f)$ on $[t_-,t_+]$
must be completed before $\tau_1$. 
On the other hand,
\beqnn
h & = & \left(f(\theta_1) - \inf_{[0,\theta_1]} f\right) \ - \ \left(f(t_+)-\inf_{[0,t_+]} f\right) \\ 
   & = & f(\theta_1) - f(t_+) \\
   & \leq & \sup_{[\theta_1,t+]} f - f(t_+), 
\eeqnn
where we used the fact that $\inf_{[0,t]} f$ must be constant during an excursion of $\Ga^0(f)$ away
from $0$ in the second equality.
By continuity of $f$, there must exist $s\in[\theta_1 ,t_+]$
such that $\sup_{[\theta_1,s]}f -f(s)=h$, which implies that $\tau_1 \leq t_+$, thus yielding a contradiction
and proving that 
$\sigma_1\leq \theta_1$.

\medskip

Next,
the strategy for proving our proposition consists in breaking
the intervals $[0,\tau_1]$ into three pieces: $[0,\sigma_1]$, $[\sigma_1,\theta_1]$ and $[\theta_1,\tau_1]$. 
First, on $[0,\sigma_1]$, we must have $\Ga^{0}(f) < h$ since $\sigma_1< \theta_1$, and $\theta_1$
was defined as the first time $\Ga^0(f)(t)=h$.
By Lemma \ref{pr-s},
this implies that 
\beqn
\forall t\in [0,\sigma_1],\  \La_{0,h}(f)(t)=\Ga^0(f)(t) = f-\inf_{[0,t]} f, \ \ \ \mbox{and} \ \La_{0,h}(f)(\sigma_1)=0, \label{6-1}
\eeqn 
where the latter equality follows directly
from the definition of $\sigma_1$.
Next by Lemma \ref{two-steps-ref}, we must have
\beqnn
\forall t\in[\sigma_1,\infty], \  \La_{0,h}(f)(t) & = & \La_{0,h}(1_{\cdot\geq\sigma_1}(f - f(\sigma_1))(t).
\eeqnn
Using the fact that  
$\inf_{[0,t]} f$
remains constant during an excursion of $\Ga^0(f)$ away from $0$
and the fact that
$f-\inf_{[0,\cdot]}f<h$ on $[0,\sigma_1]$, 
it is easy to see that
$\theta_1$
coincides with the first visit of $1_{\cdot\geq\sigma_1}(f(\cdot)-f(\sigma_1))$ at $h$. 
Furthermore, since $\sigma_1$ is the {\it last} visit at $0$
of $\Ga^0(f)$ before $\tau_1$, we must have 
$$
\forall t\in (\sigma_1,\tau_1), \ \ \ f(t)-\inf_{[0,t]}f = f(t)-f(\sigma_1)>0.
$$
In particular, $f-f(\sigma_1)\in(0,h]$ on  the interval
$(\sigma_1,\theta_1]$ and thus,  $(0,0)$ solves the Skorohod equation for $1_{\cdot\geq\sigma_1}(f-f(\sigma_1))$ on this interval. This yields
\beqn
\forall t\in(\sigma_1,\theta_1), \ \La_{0,h}(f)(t)  \ = f(t) -  f(\sigma_1) >0.
\eeqn
Finally, using $\La_{0,h}(f)(\theta_1)=h$,  Lemma \ref{two-steps-ref} implies that 
\beqnn
\forall t\in[\theta_1,\tau_1], \ \  \Lambda_{0,h}(f)(t) 
									& = &  \Lambda_{0,h}(h +  1_{\cdot\geq \theta_1} (f(\cdot) - f(\theta_1)) \label{bordel}.
\eeqnn
A straightforward computation yields
\beqnn
\forall t\in[\theta_1,\tau_1], \ \Ga^h\left(h+1_{\cdot\geq \theta_1} (f(\cdot) - f(\theta_1)\right)(t)  & =    & f(t) + h - \sup_{[\theta_1,t]} f 
\eeqnn
By definition
of $\tau_1$, the RHS of the equality must remain positive on $[\theta_1,\tau_1)$. Using Lemma \ref{pr-s},
we get that 
\beqn
\forall t\in[\theta_1,\tau_1), \ \  \Lambda_{0,h}(f)(t)  \ = \ \Ga^h(f)(t) \ = f(t) +h - \sup_{[\theta_1,t]} f >0, \nonumber \\
\mbox{and} \ \ \Lambda_{0,h}(f)(\tau_1)  \ = 0
\label{6-3}
\eeqn
where the second equality follows from the  very definition of $\tau_1$.
Finally, combining (\ref{6-1})--(\ref{6-3}) yields
$$
\La_{0,h}(f)(t) = f \ - \ \left(1_{t\in[0,\sigma_1]} \cdot \inf_{[0,t]} f  \  + \ 1_{t\in[\sigma_1,\theta_1]} \cdot f(\sigma_1) \ + \ 1_{t\in[\theta_1,\tau_1]}\cdot (\sup_{[\theta_1,t]}f -h)   \right).
$$
As a consequence, $f_h=f-\La_{0,h}(f)$ is non-increasing (resp., non-decreasing) on $[0,\sigma_1]$ (resp., $[\sigma_1,\tau_1]$). Furthermore,
the argument above also shows that $\tau_1$ (resp., $\sigma_1$) is the first returning time (resp., exit time) at level $0$. Indeed, piecing together the previous results, we proved 
\beqnn
\La_{0,h}(f)(t)<h, \  \mbox{on $[0,\sigma_1)$ and } \La_{0,h}(f)(\sigma_1)=0 \\
\La_{0,h}(f)>0 \ \ \mbox{on $(\sigma_1,\tau_1)$}, \ \ \mbox{and}  \ \ \La_{0,h}(f)(\theta_1)=h, \ \La_{0,h}(f)(\tau_1)=0.
\eeqnn



\bigskip

{\bf Step n+1.} Let us assume Proposition \ref{rtt} is valid up to rank $n$. Recall that $R^{\tau_n}(f)=1_{t\geq\tau_n}(f(t)-f(\tau_n))$. 
By Lemma \ref{two-steps-ref},
$$
\forall t\in[\tau_n, \infty), \ \La_{0,h}(f) = \La_{0,h}(R^{\tau_n}(f)),
$$
where we used the induction
hypothesis to write $\La_{0,h}(f)(\tau_n)=0$. On the other hand, it is straightforward to check from the 
definitions of $\tau_{n+1},\theta_{n+1}$ and $\sigma_{n+1}$ in (\ref{t-s}) that 
$$
\sigma_{n+1}(f) \ = \ \sigma_1(R^{\tau_n}(f)),\ \ \tau_{n+1}(f) \ = \  \tau_{1}(R^{\tau_n}(f)),  \ \ \theta_{n+1}(f) = \theta_{1}(R^{\tau_n}(f)) 
$$
Applying the case $n=1$ to the function $R^{\tau_{n}}(f)$ immediately implies that our proposition is valid at step $n+1$.

\end{proof}

In order to prove Theorem \ref{main1},
we will combine Proposition {\ref{rtt} with the following lemma.

\begin{lemma}\label{isot}
Let $(\cT_1,d_1)$ and $(\cT_2,d_2)$ be two rooted real trees
with only finitely many leaves. For $k=1,2$, let $S_k=(z_1^k,\cdots,z_N^k)\in \cT_k$ 
such that the two following conditions hold.
\begin{enumerate}
\item For $k=1,2$, $S_k$ contains the leaves of $\cT_k$.
\item
$$
\forall i\leq N, \ d_1(\rho^1,z_i^1)=d_2(\rho^2,z_i^2) \ \mbox{and} \ \forall i,j \leq N, \ d_1(\rho^1, z_i^1\wedge z_j^1)=d_2(\rho^2, z_i^2 \wedge z_j^2),
$$ where  $\rho^k$ is the root of $\cT_k$.
\end{enumerate}
Under those conditions, there exists a root preserving isometry
from $\cT_1$ onto $\cT_2$.
\end{lemma}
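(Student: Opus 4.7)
The plan is to construct the isometry explicitly by mapping each ancestral line $[\rho^1, z_i^1]$ in $\cT_1$ to the corresponding ancestral line $[\rho^2, z_i^2]$ in $\cT_2$. Since each $\cT_k$ has only finitely many leaves and $S_k$ contains all of them, the standard fact that in a compact real tree every point is an ancestor of some leaf gives
\[
\cT_k \;=\; \bigcup_{i=1}^{N} [\rho^k, z_i^k].
\]
Hypothesis (2) ensures that for each $i$ the geodesics $[\rho^1, z_i^1]$ and $[\rho^2, z_i^2]$ have the same length, so I can define $\Phi : \cT_1 \to \cT_2$ by
\[
\Phi(x) \;:=\; \psi^{\rho^2, z_i^2}\bigl(d_1(\rho^1, x)\bigr) \qquad \text{whenever } x \in [\rho^1, z_i^1],
\]
where $\psi^{a,b}$ is the unit-speed parametrization of $[a,b]$ provided by property (i) of real trees.

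The first step is to check that $\Phi$ is well defined, independently of the choice of $i$. If $x \in [\rho^1, z_i^1] \cap [\rho^1, z_j^1]$, then $x$ is a common ancestor of $z_i^1$ and $z_j^1$, so $x \preceq z_i^1 \wedge z_j^1$; in a real tree this forces $[\rho^1, z_i^1] \cap [\rho^1, z_j^1] = [\rho^1, z_i^1 \wedge z_j^1]$, and in particular $d_1(\rho^1, x) \leq d_1(\rho^1, z_i^1 \wedge z_j^1)$. Thanks to hypothesis (2), the same bound holds in $\cT_2$, so the two candidate images along $[\rho^2, z_i^2]$ and $[\rho^2, z_j^2]$ both lie on the common initial segment $[\rho^2, z_i^2 \wedge z_j^2]$ and must coincide. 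Root preservation is the case $d_1(\rho^1, x) = 0$; the symmetric construction yields an inverse, so $\Phi$ is a bijection.

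It remains to show that $\Phi$ preserves distances. Using the standard identity $d_k(a,b) = d_k(\rho^k, a) + d_k(\rho^k, b) - 2 d_k(\rho^k, a \wedge b)$ together with $d_1(\rho^1, x) = d_2(\rho^2, \Phi(x))$, the problem reduces to checking that $d_1(\rho^1, x \wedge y) = d_2(\rho^2, \Phi(x) \wedge \Phi(y))$ for all $x, y \in \cT_1$. For $x \in [\rho^1, z_i^1]$ and $y \in [\rho^1, z_j^1]$, the inclusion $[\rho^1, x] \cap [\rho^1, y] \subset [\rho^1, z_i^1 \wedge z_j^1]$ combined with a short computation in real trees yields
\[
d_1(\rho^1, x \wedge y) \;=\; \min\bigl\{ d_1(\rho^1, x),\ d_1(\rho^1, y),\ d_1(\rho^1, z_i^1 \wedge z_j^1) \bigr\},
\]
and the analogous formula with superscript $2$ holds for $\Phi(x) \wedge \Phi(y)$. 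Hypotheses (1) and (2) make the two right-hand sides equal, and the proof is complete.

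The main obstacle is precisely the well-definedness step: it is exactly there that the meeting-height part of hypothesis (2) is indispensable. Once that is settled, the isometry identity is essentially automatic, since the height of $x \wedge y$ depends only on the heights of $x$, $y$ and of the common ancestors of the distinguished points $z_i^k$, all of which $\Phi$ preserves by construction.
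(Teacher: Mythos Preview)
Your argument is correct and follows essentially the same route as the paper: define the map on each ancestral line $[\rho^1,z_i^1]\to[\rho^2,z_i^2]$ via the unit-speed parametrizations, use hypothesis~(2) to show consistency on overlaps, and then verify that distances are preserved. The only cosmetic differences are that you package the isometry check into the single identity $d_1(\rho^1,x\wedge y)=\min\{d_1(\rho^1,x),d_1(\rho^1,y),d_1(\rho^1,z_i^1\wedge z_j^1)\}$ where the paper splits into the cases ``same ancestral line'' versus ``different ancestral lines'', and that you obtain bijectivity by invoking the symmetric inverse while the paper checks surjectivity directly.
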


\begin{proof}

For  $k=1,2$ and $m\leq N$, let $I_m^k:=[\rho^k,z_m^k]$.
Using the second assumption of our lemma, the two ancestral lines 
$I_m^1$ and $I_m^2$ must have the same length. 
From there, it easy to construct an isometry from $I_m^1$ onto $I_m^2$ as follows. Since $\cT_k$ is a tree,  
there exists a unique isometric map $\psi^{k}_m$ from
$[0,d_k(\rho^k,z_m^k)]$ onto $I_m^k$,
 such that $\psi^{k}_m(0)=\rho^k$ and $\psi^{k}_m(d_k(\rho^k,z_m^k))=z^k_m$. 
Define 
$$
\forall a\in I^1_m,  \ \  \phi_m(a) \ := \ \psi^{2}_m\circ(   \psi^{1}_m )^{-1}(a). 
$$ 
Since $d_1(\rho^1,z_m^1)= d_2(\rho^2,z_m^2)$, it is straightforward to show that 
$\phi_m$ defines an isometric isomorphism from $I^1_m$ to $I^2_m$.
Furthermore, 
$\phi_m$ preserves the root and $\phi_m(z^1_m)=z^2_m$.

Next, we claim that if $a\in I_m^1\cap I_l^1$, then $\phi_m(a)=\phi_l(a)$. 
We first show the property for $a=z_m^1\wedge z_l^1$.
Using the isometry of $\phi_m$ and the root preserving property,
we have
\beqnn
d_2(\rho^2, \phi_m(z_l^1\wedge z_m^1)) & = & d_1(\rho^1, z_l^1 \wedge z_m^1) \\
								& = & d_2(\rho^2, z_l^2 \wedge z_m^2), 
\eeqnn
where the second equality follows from the second assumption of our lemma.
Since $\phi_m(a)\in I_m^2$, it follows that $\phi_m(z_l^1\wedge z_m^1) = z_l^2\wedge z_m^2$ -- on a segment $I_m^k$, a point is uniquely determined by its distance to the root. By the same reasoning, we get that 
$\phi_l(z_l^1\wedge z_m^1) = z_l^2\wedge z_m^2$. Let us now take any point $a\in I_m^1\cap I_l^1$.
Under this assumption, we must have $a\preceq z_m^1\wedge z_l^1$.
Using the isometry property, this implies
$$
d_2(\rho^2,\phi_m(a)) \ =\  d_2(\rho^2,\phi_l(a))
$$ 
and 
$$
\phi_l(a), \phi_m(a) \preceq   z^2_m\wedge z^2_l.
$$
since we showed that $\phi_m( z^1_m\wedge z^1_l), \phi_l( z^1_m\wedge z^1_l) =  z^2_m\wedge z^2_l$. 
It easily follows that $\phi_m(a)=\phi_l(a)$, as claimed earlier.

We are now ready to construct the isometry from $\cT_1$ onto $\cT_2$.
First, for $k=1,2$, any point $a_k\in \cT_k$ must belong to some ancestral line of the form $[\rho^k,l]$,
for some leaf $l$ in the tree $\cT_k$. 
By what we just proved, and since $S_1$ contains all the leaves 
of $\cT_1$,
we can define the map $\phi$ from $\cT_1$ into $\cT_2$ as follows 
$$
\forall a\in \cT_1,  \ \ \phi(a) \ := \ \phi_m(a) \ \ \mbox{if $a\in I_m^1$}. 
$$
Since $S_2$ contains all the leaves of $\cT_2$, and 
any ancestral line of the form $[\rho^1,z^1_m]$
is mapped onto $[\rho^2,z_m^2]$, the map $\phi$ is onto. 

It remains to show that $\phi$ is isometric.
Let $a,b\in \cT_1$ and let us distinguish between two cases. First,  let us assume that $a$ and $b$ belong to the same 
ancestral line  $I_m^1$ for some $m\leq N$. Under this assumption, the property simply follows from the isometry of  $\phi_m$. 
Let us now consider the case
where $a$ and $b$ belong to two distinct ancestral lines:
$a\in I_m^1$ but $a\notin I_l^1 $, 
and $b\in I_l^1$ but $b\notin I_m^1$; in such a way that $a\wedge b=z_l^1\wedge z_m^1$. Using the fact that both
$\phi_m$ and $\phi_l$ are isometric,
and  $\phi(z_l^1\wedge z_m^1)=z_l^2\wedge z_m^2$, and $\phi_l(a)\in I_l^2,\phi_m(a)\in I_m^2$  we get that 
$\phi(a) \wedge \phi(b)=z_l^2\wedge z_m^2 $.
We can then write $[\phi(a),\phi(b)]$ as the union $[z_l^2\wedge z_m^2, \phi(a)]\cup[z_l^2\wedge z_m^2, \phi(b)]$ and 
write 
\beqnn
d_2(\phi(a),\phi(b)) & = & d_2(z_l^2\wedge z_m^2,\phi(a))+d_2(z_l^2\wedge z_m^2,\phi(b)) \\
			       & = & d_1(z_l^1\wedge z_m^1,a)+d_1(z_l^1\wedge z_m^1,b) \\
			       & = & d_1(a,b) 
\eeqnn
where the second equality follows by applying the previous case to the pairs of
points $(z_l^1\wedge z_m^1,a)$ and
$(z_l^1\wedge z_m^1,b)$.


\end{proof}

In the following,
we make the assumption that the $h$-trimming
of the tree $\cT_f$
is non-empty,
i.e., that $\sup_{[0,\infty)} f <h$.

\begin{proof}[Proof of Theorem \ref{main1}]

Recall that $C_0^+(\R^+)$
denotes the set of continuous non-negative functions  
with $f(0)=0$
and compact support.
We start by showing the first item of our theorem, i.e., that 
$f\in C_0^+(\R^+)$
implies that $f_h\in C_0^+(\R^+)$.
First, as an easy corollary of Proposition \ref{rtt}, we get that for every $f\in C_0^+(\R^+)$,
the function $f_h=f-\La_{0,h}(f)$ is non-negative. This simply follows from the fact
that  
the local minima of $f_h$ are attained on the set $\{\sigma_i\}$, on which
$f(\sigma_i)=f_h(\sigma_i)$ since $\La_{0,h}(f)(\sigma_i)=0$. Since $f(\sigma_i)\geq0$, the function
$f_h$ is non-negative. Secondly, the function $f_h$ must have compact support. In order to see 
that, let us take $K$ such that $\forall t\geq K, f(t)=0$. For $t\geq K$, 
$f_h(t)=-\La_{0,h}(f)$ and since $f_h\geq0$ and $-\La^{0,h}(f)\leq0$, it 
follows that  $f_h\equiv0$ after time $K$.

Next,  let us show that
$f_h$
is the contour function of the $h$-trimming of the tree $\cT_f$ (up to
an isomtetric isomorphism preserving the root).
For $k<N_h(f)$, Proposition \ref{rtt}
immediately  implies that the maximum of $f_h$ on $[\sigma_{k},\sigma_{k+1})$
is attained at time $\tau_k$ and that
the set
$$
I_k:=\{t \in [\sigma_k,\sigma_{k+1}] \ : \ f(t)=f(\tau_k)  \ \}
$$ 
is a closed interval. On the one hand, any time $t\in[\sigma_k,\sigma_{k+1}]$
outside of this interval 
is the starting or ending time of a sub-excursion with (strictly) positive height, and for such $t$, $p_{f_h}(t)$ can not be a leaf. On the other hand, 
we have $p_f(t)=p_f(t')$ for $t,t'\in I_k$. This implies that the only possible leaf visited during the time interval $[\sigma_{k},\sigma_{k+1}]$
is given by $p_{f_h}(\tau_k)$ and thus, that
the set of leaves of $\cT_{f_h}$ is included in the finite set of points $\{p_{f_h}(\tau_n)\}_{n\leq N_h(f)}$.
 
 
\medskip

As explained at the beginning of this section,  any leaf of the tree $\Tr(\cT_f)$
must be explored at some $\tau_n$, i.e., the set of leaves
of $\Tr(\cT_f)$
is a subset of $\{p_f(\tau_n)\}_{n\leq N_h(f)}$.
In order to prove our result, we use Lemma \ref{isot}
with $z^1_i=p_f(\tau_i)$ and $z_i^2=p_{f_h}(\tau_i)$
and $N=N_h(f)$.
First,
item 1. of Proposition \ref{rtt} implies that the height 
of the vertices $p_f(\tau_i)$ and $p_{f_h}(\tau_i)$ are identical, i.e., that $f(\tau_i)=f_h(\tau_i)$
since $\La_{0,h}(f)(\tau_i)=0$.
In order to show that $\Tr(\cT_f)$ 
and $\cT_{f_h}$ are identical (up to a root preserving isomorphism), 
it is sufficient to check that 
$$
\forall i < j, \ \ \inf_{[\tau_i,\tau_j]} f = \inf_{[\tau_i,\tau_j]} f_h,
$$
i.e., that the height of the most recent common ancestor of the vertices visited at $\tau_i$ and $\tau_j$ -- respectively
the $i^{th}$ and $j^{th}$ leaf --
is the same in both trees.
To justify the latter relation, we first note that 
the definition of the $\sigma_m$'s (see (\ref{t-s})) implies that
$\inf_{[\tau_i,\tau_j]} f$ must be attained at some $\sigma_k$ (for some
$k\in\{i+1,\cdots,j\}$). On the other hand, the third item of the  
Proposition \ref{rtt} implies that 
the same must hold for $f_h$ since the set of local minima of $f_h$ coincide with $\{f_h(\sigma_i)\}$. 
Since 
$
f(\sigma_i)=f_h(\sigma_i),
$ 
($s_i=\sigma_i$ by the second item of Proposition \ref{rtt} and $\La^{0,h}(f)(s_i)=0$), Theorem \ref{main1} follows. 

\end{proof}

\bigskip

\begin{proof}[Proof of Proposition \ref{algo}] 
Let us define 
$$
\cT_{f_h}^n \ := \ \{z\in\cT_{f_h}: \ \exists t\leq \tau_n, \ p_{f_h}(t)=z\},
$$
the set of vertices in $\cT_{f_h}$ visited up to time $\tau_n$.
$\cT_{f_h}$ can be constructed recursively by adding to $\cT^n_{f_h}$
all the vertices in $\cT^{n+1}_{f_h}\setminus \cT^{n}_{f_h}$ for every $n<N$,
where $N\equiv N_h(f)$. 
(Indeed, by definition of a real tree from its contour path,
if a point is visited at a given time, its ancestral line must have been explored before 
that time. Thus, if all the leaves have been explored at a given time -- e.g., at $\tau_N$-- every vertex has been visited 
at least once before that.)
For every $n<N$, let us  show that the  set $\cT^{n+1}_{f_h}\setminus \cT^{n}_{f_h}$ is a branch (i.e., a segment $[a,b]$
with $a,b\in{\cT_{f_h}}$ and $a\preceq b$) such that
\begin{enumerate} 
\ii[(i)] with tip $b=p_{f_h}(\tau_{n+1})$ -- i.e., 
$\forall z\in  \cT_{f_h}^{n+1}\setminus \cT^{n}_{f_h}, \ z\preceq p_{f_h}(\tau_{n+1})$ --
\ii[(ii)] attached to $a=p_{f_h}(\sigma_{n+1})$ -- i.e., $\forall z\in  \cT_{f_h}^{n+1}\setminus \cT^{n}_{f_h}, \ p_{f_h}(\sigma_{n+1}) \preceq z$.
\ii[(iii)] $p_f(\sigma_{n+1})$ belongs
to the ancestral line
$[\rho_{f_h},p_{f_h}(\tau_n)]$.
\end{enumerate}
Let $t\in[\tau_n,\tau_{n+1}]$ and let $n<N$.
The definition of the real tree ${\cal T}_{f_h}$ implies  
that 
the point $p_{f_h}(t)$
has been visited before $\tau_n$ if and only if there exists $s\leq\tau_n$ such that 
\be\label{gt-gs}
\inf_{[s,t]} f_h = f_h(t) = f_h(s),
\ee
i.e., $t$ must be the ending time of a sub-excursion straddling $\tau_{n}$.
On the one hand, 
since $f_h$ is non-increasing on 
$[\tau_n,\sigma_{n+1}]$, we have
\be\label{anc}
\forall t\in[\tau_n,\sigma_{n+1}],\  \inf_{[\tau_{n},t]} f_h = f_h(t). 
\ee
Since $f_h(0)=0$ and $f_h(t)\geq 0$
one can find $s\leq \tau_n$ such that (\ref{gt-gs}) is satisfied (using the continuity of $f_h$).
Thus, every point visited on the time interval $[\tau_n,\sigma_{n+1}]$ has already been visited before $\tau_{n}$ and thus does not belong to $\cT_{f_h}^{n+1}\setminus \cT_{f_h}^{n}$.

On the other hand,
the function $f_h$ is non-decreasing on $[\sigma_{n+1},\tau_{n+1}]$. Let us define 
$$
\bar \theta_{n+1}=\sup\{t\in[\sigma_{n+1},\tau_{n+1}] \ : \ f_h(t)=f_h(\sigma_{n+1}) \}
$$
(with the convention $\sup\{\emptyset\}=\tau_{n+1}$). First, 
the definition of our real tree ${\cal T}_{f_h}$ implies that for
 any $p_{f_h}(t)$ 
with  $t\in[\sigma_{n+1}, \bar \theta_{n+1}]$
coincides with $p_{f_h}(\sigma_{n+1})$. Secondly, for any $t\in [\bar \theta_{n+1},\tau_{n+1}]$, and any $s\leq\tau_{n}$
$$
\inf_{[s,t]} f_h \leq f_h(\sigma_{n+1})<f_h(t) 
$$
which implies that any point visited during the interval $(\bar \theta_{n+1},\tau_{n+1}]$ belongs to ${\cal T}_{f_h}^{n+1}\setminus {\cal T}_{f_h}^n$. Furthermore,
the previous inequality implies that
\be\label{deux}
\forall t\in(\bar \theta_{n+1},\tau_{n+1}], \ \ p_{f_h}(t)\succ p_{f_h}(\sigma_{n+1}).
\ee 
Finally,
$$
\forall t\in[\bar \theta_{n+1},\tau_{n+1}],\  \inf_{[t,\tau_{n+1}]} f_h =  f_h(t),
$$
which implies that
\be\label{trois}
\forall t\in[\bar \theta_{n+1},\tau_{n+1}], \ \ p_{f_h}(t) \preceq p_{f_h}(\tau_{n+1}).
\ee
Combining the results above, we showed the claims (i)--(iii) made earlier:
$\cT_{f_h}^{n+1}\setminus \cT_{f_h}^{n}$ is a branch
of extremity $p_f(\tau_{n+1})$ (see (\ref{trois})) attached at 
the point $p_{f_h}(\sigma_{n+1})$ (see (\ref{deux})), which belongs to $[\rho_{f_h},p_{f_h}(\tau_n)]$ (see (\ref{anc}) 
applied to $t=\sigma_{n+1}$). 
Furthermore, 
the length of the branch is given by
\be\label{xn}
X_{n+1} := f_h(\tau_{n+1}) - f_h(\sigma_{n+1}),
\ee
(height of the $(n+1)^{th}$ leaf $-$ height of the attachment point) and the distance of the attachment point from the leaf $p_{f_h}(\tau_n)$
is given by
\be\label{yn}
Y_{n+1} := f_h(\tau_{n}) - f_h(\sigma_{n+1}).
\ee
(Height of the $n^{th}$ leaf $-$ height of the attachment point.)

\end{proof}

\section{Proof of Theorem \ref{PN}}


Next, let $e$ be a Brownian excursion conditioned on having a height greater than $h$
and let $\{(X_n(e),Y_n(e))\}_{i\leq N_h(e)}$ be defined as in (\ref{xy}), i.e.,
\beqnn
\forall n\geq1, & X_n(e) = &   e_h(t_{n}) - e_h(s_{n})  \nonumber \\
		      & Y_n(e) =                       &    e_h(t_{n-1})-e_h(s_{n}),
\eeqnn
with $t_n\equiv t_n(e)$, $s_n\equiv s_n(e)$ and let $N_h(e)$ be the number of returns of $e$ to level $0$. 
As discussed in the introduction (see the discussion preceding Theorem \ref{PN}), in order to prove that the trimmed tree $\Tr({\cal T}_e)$
is a binary tree, we need to show that
$\{(X_i(e),Y_i(e))\}_{i\leq N_h(e)}$ is identical in law with a sequence $\{(\tilde X_i,\tilde Y_i)\}_{i\leq \tilde N}$,
where $\{(\tilde X_i,\tilde Y_i)\}_{i\in\N}$
is an infinite sequence of independent exponential random variables 
with mean $h/2$ and 
$$
\tilde N \ := \ \inf\{n\geq0 \ :  \ \sum_{i=1}^{n} (\tilde X_{i}- \tilde Y_{i+1}) < 0\}.
$$

The idea of the proof  consists in constructing a coupling
between $\{(X_i(e),Y_i(e))\}_{i\leq N_h(e)}$
and  $(\{(\tilde X_i, \tilde Y_i)\}_{i\geq 1}, \tilde N)$ as follows.
Let $w$ be a  Brownian motion with $w(0)=0$,  independent of
the excursion $e$, and define
\be\label{K(e)}
\tilde w(t) \ := \ e(t) + w\left((t-K(e))\vee0\right) \ \ \mbox{where $K(e):=\sup\{t>0: \ e(t)>0\}$},
\ee 
obtained by pasting the process $w$ at the end of the excursion $e$.
Finally,
define  $\tilde X_n:=X_n(\tilde w)$ and $\tilde Y_n:=Y_n(\tilde w)$.

It is easy to show
that the support of $\La_{0,h}(e)$
is included in the support of $e$, that we denote by $[0,K(e)]$. (This was established in the course of proving Theorem \ref{main1}.)
As a consequence,
for every $n\leq N_h(e)$,
we must have
$t_n(e),s_n(e)\leq K(e)$ (recall that for $n\leq N_h(e)$, $t_n(e)$ and $s_{n}(e)$ coincide with the $n^{th}$ {\it finite} 
returning and exit times at $0$). Since 
$e$ and $\tilde w$ (and their reflections) 
coincide up to $K(e)$, this implies that
$s_n(e)=s_n(\tilde w), t_n(e)=t_n(\tilde w)$ and that
$X_n(e)=\tilde X_n,Y_n(e)=\tilde Y_n$ for $n\leq N_h(e)$.
 Theorem \ref{PN}
is a direct consequence of our coupling and the two following lemmas.

\begin{lemma}\label{l1}
\begin{enumerate} 
\item $\tilde X_1,\tilde Y_2, \tilde X_2, \tilde Y_3,\cdots$ is an i.i.d. sequence of pair of independent exponential variables with parameter $h/2$.
Further, $\tilde Y_1=0$.
\item Under our coupling, for $1\leq i\leq N_h(e)$, 
\begin{itemize}
\item $\tilde X_i \ = \ l^h(\tilde w)(\tilde t_i)-l^h(\tilde w)(\tilde t_{i-1})$,
\item $\tilde Y_i\ = \ l^0(\tilde w)(\tilde t_i)-l^0(\tilde w)(\tilde t_{i-1})$,
\end{itemize}
where $\tilde t_i := t_i(\tilde w)$.
\end{enumerate}
\end{lemma}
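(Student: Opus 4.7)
The plan is to handle the two items separately, with Part 2 a direct consequence of Proposition \ref{rtt} and Part 1 resting on a strong-Markov restart at each return time $\tilde t_n$. For Part 2, I would apply Proposition \ref{rtt}(3): the compensator $c^h(\tilde w)$ is constant on each sojourn interval $[\tilde t_{n-1}, \tilde s_n]$ (where $\Lambda_{0,h}(\tilde w) \equiv 0$) and $c^0(\tilde w)$ is constant on each excursion interval $[\tilde s_n, \tilde t_n]$ (where $\Lambda_{0,h}(\tilde w) > 0$). Writing $\tilde w_h = -c^0 - c^h$ one immediately reads off $\tilde X_n = c^h(\tilde s_n) - c^h(\tilde t_n)$ and $\tilde Y_n = c^0(\tilde s_n) - c^0(\tilde t_{n-1})$, and then Tanaka's formula applied to the continuous semimartingale $\Lambda_{0,h}(\tilde w)$ identifies these Skorohod compensators with the local times in the normalization of (\ref{local-time}): specifically $c^0 = l^0(\tilde w)$ and $-c^h = l^h(\tilde w)$, since the one-sided window $[h-\varepsilon,h]$ combined with the $1/(2\varepsilon)$ factor in (\ref{local-time}) matches exactly the Tanaka reflection term on a process that stays in $[0,h]$. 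The identities claimed in Part 2 then follow because $l^0$ does not grow on the excursion intervals and $l^h$ does not grow on the sojourn intervals.

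For Part 1, the equality $\tilde Y_1 = 0$ is immediate, since $e > 0$ on $(0, K(e))$ forces $\tilde s_1 = 0 = \tilde t_0$. The core of the argument is to establish, for each $n \geq 1$, that conditional on the past up to $\tilde t_n$ the shifted path $R^{\tilde t_n}(\tilde w)$ is a standard Brownian motion starting at $0$. When $\tilde t_n \geq K(e)$ this is simply the strong Markov property of the independent Brownian motion $w$; when $\tilde t_n \leq K(e)$ one must splice the remaining portion $e|_{[\tilde t_n, K(e)]}$ of the conditioned excursion with the independent BM continuation after $K(e)$ and identify the concatenation with a Brownian motion from $0$. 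Once this Markov restart is in hand, Lemma \ref{two-steps-ref} together with Theorem \ref{teo1} gives that $\Lambda_{0,h}(\tilde w)$ restarted at $\tilde t_n$ is a reflected Brownian motion on $[0,h]$ starting at $0$ and independent of the history, and iterating yields the i.i.d.\ structure of $(\tilde X_i, \tilde Y_{i+1})_{i \geq 1}$. To identify the common exponential law, I would note that under the involution $\Lambda \mapsto h - \Lambda$ the descending half of a cycle becomes a reflected Brownian motion on $[0,\infty)$ from $0$ run up to its first hit of $h$, so the local time at the opposite boundary accumulated on that half cycle coincides in law with the local time at $0$ of $|B|$ at its first hitting time of $h$; by It\^o excursion theory this is exponentially distributed with the claimed mean $h/2$ in the normalization of (\ref{local-time}), and a symmetric argument handles $\tilde Y_2$.

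The main obstacle is the splicing step for $\tilde t_n \leq K(e)$: the residual $e|_{[\tilde t_n, K(e)]}$ lives inside the conditioned Brownian excursion and is not a Brownian motion in its own right. The plan is to use a Williams-type decomposition of $e$ conditioned on $\max > h$ to identify the conditional law of this residual, given $e(\tilde t_n)$ and $K(e)$, with that of a Brownian bridge to $0$, and then to verify that concatenating this bridge with the independent BM $w$ after $K(e)$ reproduces exactly the law of a Brownian motion starting at $0$ at time $\tilde t_n$. Once this splicing identity is established, the remaining pieces of Part 1 follow from the standard strong Markov property of Brownian motion and the Tanaka / It\^o-excursion computations sketched above.
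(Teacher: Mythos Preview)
Your treatment of Part 2 is essentially the paper's: express $\tilde X_n, \tilde Y_n$ as increments of the compensators $c^h, c^0$ (this follows directly from the third Skorohod condition in Theorem~\ref{Skor} rather than from Proposition~\ref{rtt}(3), but the computation is the same), then identify the compensators with the boundary local times via the two-sided analogue of the L\'evy/Tanaka theorem. One minor circularity: you invoke Theorem~\ref{teo1} for the fact that $\Lambda_{0,h}$ of a Brownian motion is a standardly reflected BM with compensators equal to local times, but in the paper this fact is established \emph{inside} the proof of the present lemma and only quoted afterwards in Section~\ref{proof-sect}.

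The genuine gap is in your splicing step for Part~1. You propose to restart at each $\tilde t_n$ and, when $\tilde t_n \leq K(e)$, to identify the conditional law of $e|_{[\tilde t_n, K(e)]}$ given $e(\tilde t_n)$ and $K(e)$ with a Brownian bridge to $0$, then check that bridge-plus-independent-BM yields a Brownian motion. This identification is incorrect: the residual of an excursion, conditioned on its current value and its lifetime, is constrained to stay strictly positive on $(\tilde t_n, K(e))$ and is therefore \emph{not} a Brownian bridge. A Williams-type decomposition will not produce an unconstrained bridge here, and the subsequent concatenation argument does not go through as written.

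The paper avoids this difficulty by choosing a different restart time. Rather than $\tilde t_n$, it restarts at $\bar T_1 := \inf\{t : e(t) = h\}$, the first hitting time of level $h$ by the excursion. The standard fact from It\^o excursion theory is that under the excursion law conditioned on $\{\max e \geq h\}$, the post-$\bar T_1$ path is a Brownian motion from $h$ killed at its first zero. Concatenating this with the independent BM $w$ after $K(e)$ gives, by the ordinary strong Markov property, an unconditioned Brownian motion from $h$. Thus $\tilde w(\cdot + \bar T_1)$ is an honest Brownian motion; since every $\tilde t_n$ with $n\geq 1$ lies after $\bar T_1$, the i.i.d.\ exponential structure of $(\tilde X_i,\tilde Y_{i+1})_{i\geq 1}$ then drops out of standard excursion theory for reflected BM on $[0,h]$, with no Williams decomposition or bridge argument needed. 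If you still want to phrase things as a restart at $\tilde t_n$, the correct statement is that the residual is a BM from $e(\tilde t_n)$ killed at $0$ (not a bridge), and this is exactly what follows by applying the strong Markov property to the post-$\bar T_1$ Brownian motion.
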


\begin{lemma}\label{l2}
Under our coupling,
$$
N_h(e) = \ \inf\{n : \sum_{i=1}^{n} (\tilde X_{i}- \tilde Y_{i+1}) < 0\}  \ \ \mbox{a.s.}.
$$
\end{lemma}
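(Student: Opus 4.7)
The plan is to compute the partial sum $S_n := \sum_{i=1}^n (\tilde X_i - \tilde Y_{i+1})$ in closed form: it will turn out to equal $\tilde w_h(\tilde s_{n+1})$, the height of the $(n+1)$-th prospective branching point produced by the algorithm of Proposition \ref{algo} applied to $\tilde w$. I then verify that this height stays $\geq 0$ for $n<N_h(e)$ (so the branchings land inside $\Tr(\cT_e)$) and becomes strictly negative at $n=N_h(e)$ (so the $(N_h(e)+1)$-th would-be branching falls below the root).

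Telescoping is routine: from the definitions in (\ref{xy}), $\tilde X_i - \tilde Y_{i+1} = \tilde w_h(\tilde s_{i+1}) - \tilde w_h(\tilde s_i)$, so $S_n = \tilde w_h(\tilde s_{n+1}) - \tilde w_h(\tilde s_1)$. Since $\tilde w=e$ on $[0,K(e)]$, $\tilde w(0)=0$, and $e>0$ on $(0,K(e))$, the definition (\ref{t-s}) together with Proposition \ref{rtt}(2) forces $\tilde s_1=0$, hence $\tilde w_h(\tilde s_1)=0$ and $S_n=\tilde w_h(\tilde s_{n+1})$. Moreover $\Lambda_{0,h}(\tilde w)(\tilde s_{n+1})=0$ by Proposition \ref{rtt}(2), so $\tilde w_h(\tilde s_{n+1})=\tilde w(\tilde s_{n+1})$. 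For $n<N_h(e)$, the coupling yields $\tilde s_{n+1}=s_{n+1}(e)\leq K(e)$, on which $\tilde w_h=e_h$, so $S_n=e_h(s_{n+1})\geq 0$ by Theorem \ref{main1}(1). The remaining, and main, task is to prove $S_{N_h(e)}<0$ almost surely, i.e. $\tilde w(\tilde s_{N_h(e)+1})<0$.

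Set $\hat K := \tilde t_{N_h(e)} = \tau_{N_h(e)}(e)\leq K(e)$. Since $\Lambda_{0,h}(\tilde w)(\hat K)=0$, Lemma \ref{two-steps-ref} gives $\Lambda_{0,h}(\tilde w)=\Lambda_{0,h}(R^{\hat K}(\tilde w))$ on $[\hat K,\infty)$, whence $\tilde s_{N_h(e)+1}=\sigma_1(R^{\hat K}(\tilde w))$ and $\tilde t_{N_h(e)+1}=\tau_1(R^{\hat K}(\tilde w))$. The definition (\ref{t-s}) characterizes $\sigma_1(R^{\hat K}(\tilde w))$ as the last running-minimum time of $R^{\hat K}(\tilde w)$ on $[0,\tau_1(R^{\hat K}(\tilde w))]$, so $R^{\hat K}(\tilde w)(\tilde s_{N_h(e)+1})=\inf_{[0,\tilde t_{N_h(e)+1}]} R^{\hat K}(\tilde w)$. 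I then decompose this infimum at $K(e)$: on $[\hat K,K(e)]$, $R^{\hat K}(\tilde w)(t)=e(t)-e(\hat K)\geq -e(\hat K)$; on $[K(e),\infty)$, $R^{\hat K}(\tilde w)(t)=w(t-K(e))-e(\hat K)$ is an independent Brownian motion starting from $-e(\hat K)$. By the definition of $N_h(e)$, $\Lambda_{0,h}(e)$ does not reach $h$ on $(\hat K,K(e)]$, so $\tilde t_{N_h(e)+1}>K(e)$; and because $w$ a.s.\ takes negative values in every right neighborhood of $0$, $\inf_{[K(e),K(e)+\eps]} R^{\hat K}(\tilde w)<-e(\hat K)$ for all small enough $\eps$ with $[K(e),K(e)+\eps]\subseteq[0,\tilde t_{N_h(e)+1}]$. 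Combining gives $R^{\hat K}(\tilde w)(\tilde s_{N_h(e)+1})<-e(\hat K)$, and since $R^{\hat K}(\tilde w)(\tilde s_{N_h(e)+1})=\tilde w(\tilde s_{N_h(e)+1})-e(\hat K)$, this is $\tilde w(\tilde s_{N_h(e)+1})<0$, as required. The main obstacle is this decomposition at $K(e)$: $R^{\hat K}(\tilde w)$ cannot dip below $-e(\hat K)$ within the excursion's support and only touches this plateau at $K(e)$, so strict negativity of the $(N_h(e)+1)$-th prospective branching height must come entirely from the Brownian oscillation of $w$ just after time $0$, which is only usable because the non-hitting of $h$ on $(\tau_{N_h(e)},K(e)]$ pushes $\tilde t_{N_h(e)+1}$ strictly past $K(e)$.
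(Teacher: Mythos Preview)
Your proof is correct and follows the paper's overall strategy: compute the telescoping identity $S_n=\tilde w_h(\tilde s_{n+1})$, use $e_h\geq 0$ on $[0,K(e)]$ for $n<N_h(e)$, and then treat $n=N_h(e)$ separately. The paper obtains $\tilde w_h(\tilde s_1)=0$ via $\tilde Y_1=0$ from Lemma~\ref{l1}, while you argue $\tilde s_1=0$ directly; these are equivalent.

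The one genuine difference is how strict negativity at $n=N_h(e)$ is obtained. The paper argues more softly: by Proposition~\ref{rtt}(3), $\tilde w_h$ attains its minimum on $[\tilde t_{N_h(e)},\tilde t_{N_h(e)+1}]$ at $\tilde s_{N_h(e)+1}$; since $K(e)$ lies in this interval and $\tilde w_h(K(e))=0$, one gets $S_{N_h(e)}\leq 0$, and then invokes the continuous (exponential) distribution of the $\tilde X_i,\tilde Y_i$ to rule out equality. Your argument is instead purely pathwise: you identify $\tilde w(\tilde s_{N_h(e)+1})$ with the running infimum of $R^{\hat K}(\tilde w)$ over $[\hat K,\tilde t_{N_h(e)+1}]$, split this infimum at $K(e)$, and use that the independent Brownian motion $w$ a.s.\ dips below $0$ immediately, together with $\tilde t_{N_h(e)+1}>K(e)$. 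This avoids any appeal to the law of the $(\tilde X_i,\tilde Y_i)$ and would work under weaker distributional assumptions on the post-$K(e)$ driver; the paper's route is shorter but relies on Lemma~\ref{l1}(1).
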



\begin{proof}[Proof of Lemma \ref{l1}]

Let us first prove that $\tilde Y_1=0$ and that $\tilde Y_1=l^0(\tilde t_{1})-l^0(\tilde t_0)$.
Let 
$$
\bar T_1:=\inf\{t: \ \tilde w(t)=e(t)=h\}.
$$
Since $e$ is a Brownian 
excursion with height larger than $h$,
$\bar T_1<\infty$ and 
$\tilde w\in(0,h]$ on $(0,\bar T_1]$. From there, it immediately
follows that $\La_{0,h}(\tilde w)=\tilde w$ on $[0,\bar T_1]$
and that $\bar T_1$ is the first returning time at level $h$
for the reflected process, i.e., $\bar T_1 = T_1(\tilde w)$ (see (\ref{def:T}) for a definition
of $T_1(\tilde w)$). 
Further, 
$\tilde s_1$ -- the first exit time of $\La_{0,h}(\tilde w)$ at level $0$ --
is equal to $0$.
Since $c^h(\tilde w)$ does not vary off the set $\{t \ : \ \La^{0,h}(\tilde w)(t)=h\}$, we have
\beqnn
\tilde Y_1 & = &  -(c^0+c^h)(\tilde w)(0)+(c^0+c^h)(\tilde w)(\tilde s_1) \\
	        & = &  -c^0(\tilde w)(0)+c^0(\tilde w)(\tilde s_1),
\eeqnn
implying that $\tilde Y_1=0$.
Finally, we also get that $l^0(\tilde t_{1})-l^0(\tilde t_0)=0$,
since
$\tilde t_1$ coincides with the first returning time of the 
reflected process at $0$, and this process
never hits $0$ on the interval $(0,\tilde t_1)$.

Before proceeding with 
the rest of the proof,
we start with a preliminary discussion.
Let $w'$ be a one-dimensional Brownian motion starting at $x\in[0,h]$.
Recall that the one-sided Skorohod reflection $\Ga^0(w')$ is distributed as the absolute value
of a standard Brownian motion,  
and the compensator
$c(w')(t):=(-\inf_{[0,t]} w')^+$ is the local time
at $0$ of $\Ga^0(w')$. A proof of this statement can be found in \cite{KS91}. By following the exact same steps,
one can prove an analogous statement for the two-sided case, i.e., that for any Brownian motion $w'$ starting at some $x\in[0,h]$,
$\La_{0,h}(w')$ is identical in law with $\lambda_{0,h}(w')$ (the standard reflection of the Brownian motion $w'$ -- see Section \ref{Intro} in the discussion preceding
Theorem \ref{teo1} for a description of $\lambda_{0,h}(w')$)
and that $-c^h(w')$ and $c^0(w')$ are respectively the local times at $h$ and $0$ of this process. 

Next, let us define $\tilde t_n=t_n(\tilde w)$
and $\tilde s_n=s_{n}(\tilde w)$
and recall that  the $h$-cut $\tilde w_h$ is defined as
$$
\tilde w_h = \tilde w - \La_{0,h}(\tilde w) = -c^0(\tilde w) - c^h(\tilde w).
$$
By definition of $\tilde X_n$, we have 
\beqn
\tilde X_{n}
& = &   \tilde w_h(\tilde t_{n}) -  \tilde w_h(\tilde s_{n})  \nonumber \\
& = &  c^h(\tilde w)(\tilde s_{n}) -  c^h(\tilde w)(\tilde t_{n})  \nonumber \\
& = & c^h(\tilde w)(\tilde t_{n-1}) -  c^h(\tilde w)(\tilde t_{n}).  \label{local-x}
\eeqn
The second line follows from the fact that 
$c^0(\tilde w)$ does not vary off the set $\La_{0,h}(\tilde w)^{-1}(\{0\})$ and $\La_{0,h}(\tilde w)>0$ on $(\tilde s_{n},\tilde t_n)$; 
the third line is a consequence of the fact that
 $c^h(\tilde w)$ does not vary off the set $\La_{0,h}(\tilde w)^{-1}(\{h\})$ and 
 $\La_{0,h}(\tilde w)<h$ on $(\tilde t_{n-1},\tilde s_n)$.
By an analogous argument, one can prove that
\beqn
\tilde Y_n & = & c^0(\tilde w)(\tilde t_n)  -c^0(\tilde w)(\tilde t_{n-1}). \label{local-y}
\eeqn

With those results at hand, we are now ready to prove our lemma. 
In the first paragraph, we already argued that
$\La_{0,h}(\tilde w)=\tilde w$ on $[0,\bar T_1]$. 
By Lemma \ref{two-steps-ref},
\beqnn
\forall t\geq0, \ w'(t) & :=& \Lambda_{0,h}(\tilde w)(t+\bar T_1) \\
        & = &  \Lambda_{0,h}(\tilde w(\cdot+\bar T_1))(t). 
\eeqnn
The strong Markov property and the discussion above imply 
that the path $w'$ 
is identical in law with a reflected 
Brownian motion  (where the reflection is a two-sided ``standard reflection")
starting at level $h$. Further, the compensators $c^0(\tilde w)(\bar T_1+\cdot)$ and $-c^h(\tilde w)(\bar T_1+\cdot)$
are the local times at $0$ and $h$ for the process $w'$. Using (\ref{local-x})--(\ref{local-y}), we easily obtain that 
$\tilde X_n$ (resp., $\tilde Y_{n}$) is the local time
accumulated at $h$ (resp., $0$), for $1\leq n \leq N_h(e)$ (resp., $2\leq n \leq N_h(e)$) on $[\tilde t_{n-1},\tilde t_n]$.
This completes the proof of the second part of our lemma. 
Finally,
by standard excursion theory, $\tilde X_1, \tilde Y_2, \tilde X_2, \tilde Y_3,\cdots$ are i.i.d. exponential random variables with mean $h/2$. Independence follows from the strong Markov property, whereas $\tilde X_i$ and $\tilde Y_{i+1}$ are distributed as the amount of Brownian local time accumulated at $0$ before occurrence of an excursion of height larger or equal to $h$.

\end{proof}


\begin{proof}[Proof of Lemma \ref{l2}]

Recall that 
$$
\tilde X_n = \tilde w_h(\tilde t_n)-\tilde w_h(\tilde s_n)  \ 
\mbox{and} \ \tilde Y_n = \tilde w_h(\tilde t_{n-1}) - \tilde w_h(\tilde s_n),
$$
where we wrote  $\tilde t_n=t_n(\tilde w)$, $\tilde s_k = s_k(\tilde w)$. Thus,
\beqnn
\tilde w_h(\tilde s_{n+1}) 
& = & 
[\sum_{i=1}^n (\tilde w_h(\tilde t_i)- \tilde w_h(\tilde t_{i-1}))]  \ - \  (\tilde w_h(\tilde t_{n}) - \tilde w_h(\tilde s_{n+1})) \\
& = &
[(-\tilde Y_1 + \tilde X_1) + (-\tilde Y_2+\tilde X_2) + \cdots +(-\tilde Y_{n}+\tilde X_{n})] \ - \tilde Y_{n+1} \\
& = &
-\tilde Y_1 + \sum_{i=1}^{n} (\tilde X_i-\tilde Y_{i+1})\\
& = & \sum_{i=1}^{n} (\tilde X_i-\tilde Y_{i+1}),
\eeqnn
where the last equality follows from the fact that  $\tilde Y_1=0$ (see the first item of the previous lemma).

Let us now show that $N_h(e)=\inf\{n : \sum_{i=1}^{n} (\tilde X_{i}- \tilde Y_{i+1}) < 0\}$ a.s..
First, let us take $n< N_h(e)$. 
On the one hand, we  already argued that $\tilde s_{n+1}\leq K(e)$. On the other hand,
$\tilde w_h\geq0$ on $[0,K(e)]$ since $e_h\geq0$ (by  Theorem \ref{main1}) and that $e_h$ and $\tilde w_h$
coincide up to $K(e)$. Thus, 
$$
\tilde w_h(\tilde s_{n+1}) = \sum_{i=1}^{n} (\tilde X_i-\tilde Y_{i+1}) \geq 0.
$$
Conversely,
let us take $n= N_h(e)$. By Proposition \ref{rtt},  
$\tilde w_h$ attains a minimum at $\tilde s_{\tilde N_h(e)+1}$ on the interval $[\tilde t_{N_h(e)}, \tilde t_{N_h(e)+1}]$. 
Using the fact that 
$$
\sup \ \mbox{Supp}(e) \geq \sup \ \mbox{Supp}(\Ga_{0,h}(e)) 
$$
(see the proof of Theorem \ref{main1}(1)),
it is easy to show that $K(e)\in[\tilde t_{N_h(e)}, \tilde t_{N_h(e)+1}]$ which implies that
\beqnn
 0 = \tilde w_h(K(e)) & \geq &  \tilde w_h(\tilde s_{N_{h}(e)+1}) \\
 		      &   =   &   \sum_{i=1}^{N_h(e)} (\tilde X_i - \tilde Y_{i+1}). 
 \eeqnn
Since $\tilde X_i$ and $\tilde Y_i$ are exponential 
random variables, this inequality is strict almost surely. This completes the proof of the lemma.
\end{proof}

\section{Proof of Theorem \ref{teo1}}
\label{proof-sect}

Let $(z^\theta, w)$
be a weak solution of (\ref{sticky-bm}). 
Our proof builds on the approach of Warren \cite{W02}. In this work, it is proved that the pair $(z^\theta,w)$ can be constructed by adding some extra noise to
the reflected process 
$$
\xi(t):= w(t)-\inf_{[0,t]} w
$$ 
as follows. First, there exists a unique $\sigma$-finite measure -- here denoted by ${\cal L}_\xi$ and  
referred to as the branch length measure --  on the metric space $(\TT_\xi,d_\xi)$ 
such that 
$$
\forall a,b\in\TT_\xi, \ \ {\cal L}_\xi([a,b]) \ = \ d_\xi([a,b]).
$$
(See \cite{E05} for more details).
Conditioned on a realization of $\xi$,
let us now consider 
the Poisson point process on $(\cT_\xi,d_\xi)$ with intensity measure
$2\theta {\cal L}_\xi$
and define the pruned tree
$$
\cT^\theta_\xi \ := \ \{z\in\cT_\xi \ : \ [\rho_\xi,z] \ \mbox{is unmarked}\},
$$ 
obtained after removal of every vertex with a marked ancestor along its ancestral line.
Finally, define $z^\theta(t)$ as the distance of the point $p_\xi(t)$ 
from
the subset $\cT^\theta_\xi$, i.e.,
\be
z^\theta(t) \ := \ \left\{ \begin{array}{c} 
\mbox{0 if $p_\xi(t)\in \cT^\theta_\xi$,}
\\
\mbox{$\xi(t) - A(t)$ otherwise,} \end{array} \right. \label{ztheta}
\ee
where $A(t)=0$ if there is no mark along the ancestral line $[\rho_\xi,p_\xi(t)]$, and is equal to the height of the first mark (counted from the root) on $[\rho_\xi,p_\xi(t)]$ otherwise. Informally,
$(z^\theta(t); t\geq0)$
can be thought of as the exploration process ÒaboveÓ
the pruned tree $\cT^\theta_\xi$ -- see Fig \ref{sticky-fig}.

\begin{figure}[ht]
   \centering
      \includegraphics[width=0.3\textwidth]{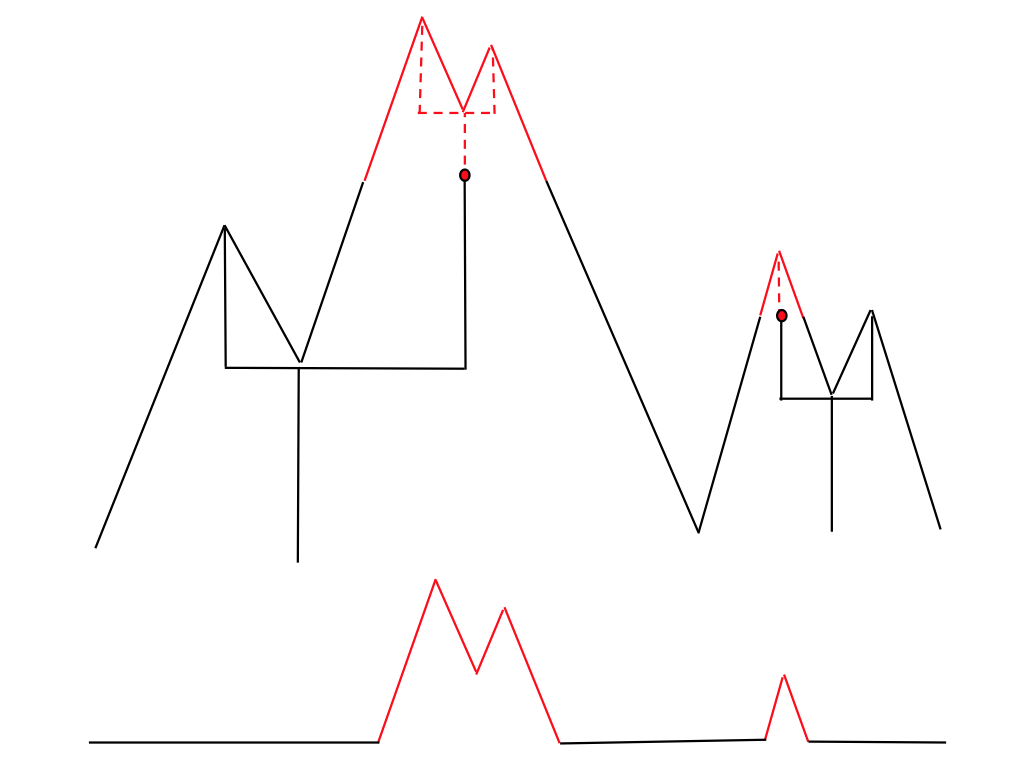}
  \caption{{\it The top panel displays a reflected random walk $\xi$ with marking of the underlying tree $\cT_\xi$.
 $\cT_\xi^\theta$ is the black subset of the tree.
The bottom panel
  displays the sticky path $(z^\theta(t); t\geq0)$ obtained by concatenating
  the contour paths
  of the red subtrees attached to $\cT_\xi^\theta$.}}\label{sticky-fig}
\end{figure}

\begin{theorem}[\cite{W02}]\label{wae}
The process
$(z^\theta(t),w(t); \ t\geq 0)$ is a weak solution of the SDE (\ref{sticky-bm}). 
\end{theorem}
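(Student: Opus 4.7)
The plan is to verify directly that the constructed process $z^\theta$ admits the semimartingale decomposition $z^\theta(t) = \int_0^t \mathbf{1}_{z^\theta(s)>0}\,dw(s) + \theta \int_0^t \mathbf{1}_{z^\theta(s)=0}\,ds$ prescribed by the SDE (\ref{sticky-bm}). Setting $\tilde A(t) := \xi(t) - z^\theta(t)$, the two defining cases of $z^\theta$ in (\ref{ztheta}) agree at mark crossings (at such a time $\xi(t)=h_m=A(t)$, so $z^\theta(t)=0$ in both descriptions). Hence $\tilde A$ and $z^\theta$ are continuous, and $z^\theta$ is $[0,\infty)$-valued.

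For the martingale part, observe that on each maximal excursion interval $(\sigma,\tau)$ of $z^\theta$ away from $0$ the exploration $p_\xi$ lies in a single pruned subtree $\Sigma_m$ attached above a single mark $m$ of height $h_m$ (the exploration enters $\Sigma_m$ by crossing $m$ from below and leaves by crossing $m$ from above). Thus $A\equiv h_m$ is constant on $(\sigma,\tau)$ and $\xi>h_m>0$ throughout, so using the Skorohod decomposition $\xi = w + L^0_\xi$ (with $L^0_\xi$ supported on $\{\xi=0\}$) one gets $dz^\theta = d\xi = dw$ on each excursion. Concatenating, the continuous local martingale part of $z^\theta$ is $M(t)=\int_0^t\mathbf 1_{z^\theta>0}\,dw$ with $\langle M\rangle_t=\int_0^t\mathbf 1_{z^\theta>0}\,ds$.

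The heart of the argument is to identify the finite-variation part of $z^\theta$ with $\theta\int_0^t\mathbf 1_{z^\theta(s)=0}\,ds$. For each Poisson mark $m$ at height $h_m$, a Tanaka--Meyer argument applied to the sub-excursions of $\xi$ above $h_m$ which correspond to traversals of $\Sigma_m$ shows that $z^\theta$ picks up an upward contribution $\tfrac12\,dL^{h_m}_\xi$ during those traversals (local-time contributions at $h_m$ coming from excursions of $\xi$ that do \emph{not} enter $\Sigma_m$ are absorbed into $\tilde A$ and cancel). Summing these contributions over the Poisson marks of intensity $2\theta\,\cL_\xi$ on $\cT_\xi$, and invoking the occupation density $L^h_\xi$ together with Campbell's formula on $\cT_\xi$, the total drift telescopes to $2\theta\cdot\tfrac12\int_0^t\mathbf 1_{z^\theta=0}\,ds=\theta\int_0^t\mathbf 1_{z^\theta=0}\,ds$, matching what is required.

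The main obstacle is this last step: the sum of local-time contributions at the random marks must equal the prescribed drift \emph{almost surely}, not merely in conditional expectation given $\xi$. One clean route is to first establish the identity in $L^2$ via a Campbell-type computation, and then control fluctuations by a second-moment estimate whose finiteness follows from integrability of $(L^h_\xi(t))^2$ against $\cL_\xi$; an alternative is to discretize, approximating $\xi$ by rescaled simple random walk with i.i.d.\ Bernoulli pruning of vertices at rate $\sim 2\theta\,\Delta x$, where every identity above is elementary and deterministic, and then to pass to the continuum limit using tightness and convergence of local times. In either execution, the arithmetic $2\theta\cdot\tfrac12=\theta$ between the Poisson mark intensity and the Tanaka factor is precisely what makes the SDE close up.
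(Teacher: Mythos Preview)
In the paper proper this theorem is simply quoted from Warren \cite{W02}; the manuscript does however contain (after \verb|\end{document}|) an unused proof, and that is the natural point of comparison. That argument proceeds in two steps which are quite different from yours: first, it shows that $z^\theta$ has the \emph{law} of a sticky Brownian motion, by reading off the excursion structure of $z^\theta$ away from $0$ and invoking the ``Special Markov Property'' of Abraham--Delmas--Voisin to identify the point process $\{(\tilde A(\tilde\alpha_i),\tilde\xi_i)\}$ as Poisson with intensity $\theta\,du\,\N(de)$; second, a short lemma shows that \emph{any} process which (a) is distributed as a sticky Brownian motion and (b) satisfies $dx=dw$ on $\{x>0\}$ must be a weak solution of (\ref{sticky-bm}), the drift being recovered from the law alone via It\^o's formula applied to $f_\eps(x)=x-\eps\arctan(x/\eps)$ and a limiting Doob--Meyer identification.

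Your treatment of the martingale part is correct and essentially matches step (b): on each excursion of $z^\theta$ the first mark is fixed, $A\equiv h_m$, $\xi>h_m>0$, and $dz^\theta=d\xi=dw$. The difficulty is the drift, and here there is a genuine gap. The Tanaka--Meyer picture you sketch does not produce what you claim: during the traversal of a pruned subtree $\Sigma_m$ one has $\xi>h_m$ strictly, so $L^{h_m}_\xi$ accumulates \emph{no} mass on the interior of that interval, and the ``$\tfrac12\,dL^{h_m}_\xi$'' contributions you sum are concentrated at the endpoints of excursions rather than spread over $\{z^\theta=0\}$. Moreover, Campbell's formula gives an identity in \emph{conditional expectation} given $\xi$, not the almost-sure identity the SDE requires; you flag this yourself as ``the main obstacle'' and propose either an $L^2$ control or a discrete approximation, but neither is carried out, and neither repairs the mismatch between local-time spikes at mark heights and the genuinely absolutely continuous drift $\theta\,1_{\{z^\theta=0\}}\,dt$. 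There is also a hidden issue one step earlier: it is not clear from the pathwise construction alone that $z^\theta$ is a semimartingale, which your decomposition presupposes.

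The paper's route avoids all of this by first pinning down the \emph{distribution} of $z^\theta$ (via excursion theory), so that the finite-variation part is forced by the law and no pathwise bookkeeping over marks is needed. If you want to keep a direct approach, the cleanest fix is to adopt exactly that two-step strategy: prove $z^\theta$ is sticky in law (your excursion description already gives you the ingredients), keep your argument for $1_{\{z^\theta>0\}}dz^\theta=1_{\{z^\theta>0\}}dw$, and then invoke the $f_\eps$-lemma to extract the drift.
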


Using this result, we proceed with the proof of Theorem \ref{teo1}.
Let $\cT^{(t)}_\xi:=\cT_\xi \cap \{p_\xi(s) : \ s\leq t\}$ be the sub-tree consisting of all the vertices in $\cT_\xi$
visited up to time $t$. 
For every $s$, the set
$$
\{x\in \cT_\xi \ : \ x\preceq p_\xi(s) \mbox{ and }d_\xi(x,p_{\xi(s)})\geq h\}.
$$
is totally ordered.
We define $a_h(s)$ as the $\sup$
of this set,
with the convention that $\sup\{\emptyset\}=\rho_\xi$. Informelly,
$a_h(s)$ is the ancestor of $p_\xi(s)$ at a distance $h$.
Following the construction of the pair $(z^\theta,\xi)$ described earlier,
$\sup_{[0,t]} z^\theta\leq h$ if and only if 
$$
\forall s\leq t, \ \ [\rho_\xi, a_h(s)]  \ \ \mbox{is unmarked},
$$
which is easily seen to be equivalent to not finding any mark
on the $h$-trimming of the 
tree $\cT^{(t)}_\xi$.
By a standard result about Poisson point processes, we have
\beqn
\P(\sup_{[0,t]} z^\theta \leq h \ | \ \sigma(w)) & = & \P\left( \Tr(\cT_\xi^{(t)}) \ \ \mbox{is unmarked} \ | \ \sigma(w)\right) \nonumber \\
							& = & \exp\left[-2\theta \cdot {\cal L}_\xi( \Tr(\cT_\xi^{(t)}) )\right] \label{eqp}.
\eeqn
Let us define
$$
\xi^{(t)}(s) \ := \ \left\{ \begin{array}{c} \xi(s) \ \mbox{if $s\leq t$} \\  (\xi(t) - (s-t))^+ \ \ \mbox{otherwise.} \end{array} \right.
$$
which is a function in $C_0^+(\R^+)$ from which we can construct the real rooted
tree $(\cT_{\xi^{(t)}},d_{\xi^{(t)}})$.

\begin{lemma}
There exists an isometric isomorphism preserving the root from 
$(\cT_\xi^{(t)},d_{\cT_\xi})$ onto $(\cT_{\xi^{(t)}}, d_{\cT_{\xi^{(t)}}})$.
\end{lemma}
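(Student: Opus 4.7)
The plan is to exhibit an explicit surjective isometry by lifting to the parameter space $\R^+$. Define $\Psi \colon [0,t+\xi(t)] \to \cT_\xi^{(t)}$ by
\[
\Psi(s) \;:=\; \begin{cases} p_\xi(s) & \text{if } s\le t,\\[2pt] p_\xi(\sigma(s)) & \text{if } t<s\le t+\xi(t),\end{cases}
\]
where $\sigma(s):=\sup\{u\le t: \xi(u)=\xi(t)-(s-t)\}$, i.e. the last time before $t$ at which $\xi$ equals the height $\xi^{(t)}(s)$. For $s\ge t+\xi(t)$ set $\Psi(s)=\rho_\xi$. Note $\Psi(s)$ lies on the ancestral line $[\rho_\xi,p_\xi(t)]$ for $s>t$ because $\xi(u)\ge\xi(t)-(s-t)$ on $[\sigma(s),t]$; this is exactly the projection-independent description of the ancestor at height $\xi^{(t)}(s)$. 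First I would check that $\Psi$ is continuous from $(\R^+,|\cdot|)$ to $(\cT_\xi^{(t)},d_\xi)$ and that $\Psi(0)=\rho_\xi=\Psi(t+\xi(t))$, so that $\Psi$ is root preserving.

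Next, the key step is to verify the fundamental identity
\[
d_\xi\bigl(\Psi(s_1),\Psi(s_2)\bigr) \;=\; d_{\xi^{(t)}}(s_1,s_2) \qquad \forall\, s_1,s_2\in[0,t+\xi(t)].
\]
I would do this by a short case analysis on the positions of $s_1,s_2$ relative to $t$. When both $s_i\le t$ the two distance formulas coincide verbatim since $\xi^{(t)}=\xi$ on $[0,t]$. When $s_1\le t<s_2$, one uses that $\xi^{(t)}$ is strictly decreasing on $[t,t+\xi(t)]$, so $\inf_{[s_1,s_2]}\xi^{(t)}=\min(\inf_{[s_1,t]}\xi,\,\xi^{(t)}(s_2))$, and then exploits the definition of $\sigma(s_2)$ (namely $\xi>\xi^{(t)}(s_2)$ on $(\sigma(s_2),t]$ and $\xi(\sigma(s_2))=\xi^{(t)}(s_2)$) to identify this infimum with $\inf_{[s_1\wedge\sigma(s_2),s_1\vee\sigma(s_2)]}\xi$; the two subcases $s_1\le\sigma(s_2)$ and $s_1>\sigma(s_2)$ are handled separately. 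When $t<s_1<s_2$, both sides reduce to $s_2-s_1$ since the images lie on the same ancestral line of $p_\xi(t)$.

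Once the isometric identity is established, the rest is formal. It immediately implies that $\Psi(s_1)=\Psi(s_2) \Longleftrightarrow d_{\xi^{(t)}}(s_1,s_2)=0 \Longleftrightarrow s_1\sim_{\xi^{(t)}}s_2$, so $\Psi$ factors through $p_{\xi^{(t)}}$ to give a well-defined injective isometry $\phi\colon \cT_{\xi^{(t)}}\to\cT_\xi^{(t)}$. Surjectivity is automatic: any $x\in\cT_\xi^{(t)}$ is $p_\xi(s)$ for some $s\le t$, and for that $s$ one has $\Psi(s)=p_\xi(s)=x$. Root preservation was noted above. Together these facts give the required root-preserving isometric isomorphism.

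The only real obstacle is the case analysis in the distance identity, specifically keeping track of where the infimum of $\xi^{(t)}$ is attained and matching it with the corresponding infimum of $\xi$ via the definition of $\sigma$. All other parts are direct consequences.
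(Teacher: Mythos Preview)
Your argument is correct: the case analysis for the distance identity goes through as you indicate (in particular the subcase $s_1>\sigma(s_2)$ works because $\xi>\xi^{(t)}(s_2)$ on $(\sigma(s_2),t]$, forcing both infima to equal $\xi^{(t)}(s_2)$), and the factoring through $p_{\xi^{(t)}}$ is the right way to pass from the parameter level to the tree.

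The paper, however, takes a shorter route in the \emph{opposite} direction. It defines $\phi:\cT_\xi^{(t)}\to\cT_{\xi^{(t)}}$ by sending $y$ to $p_{\xi^{(t)}}(t_y)$, where $t_y:=\min\{s:p_\xi(s)=y\}$ is the \emph{first} exploration time of $y$. The point is that every $y\in\cT_\xi^{(t)}$ has $t_y\le t$ by definition of $\cT_\xi^{(t)}$, so the entire argument lives on $[0,t]$ where $\xi$ and $\xi^{(t)}$ coincide. The isometry is then a one-line computation with no case split, and surjectivity is handled by observing that any $s>t$ has the same $p_{\xi^{(t)}}$-image as some $s'\le t$ (because $\xi^{(t)}$ is non-increasing on $[t,\infty)$). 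What your approach buys is an explicit description of the inverse map on the linear tail via $\sigma(s)$; what the paper's approach buys is the complete avoidance of that tail and hence of the case analysis. Both are valid, but the first-visit trick is worth remembering as it often collapses such arguments.
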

\begin{proof}
To simplify the notation, we write ${\cal T}_1=\cT_\xi^{(t)}$ and ${\cal T}_2=\cT_{\xi^{(t)}}$. 
For every $y\in\cT_1$, define  $t_y$ to be the minimal element of the fiber $\{s \ : \ p_{\xi}(s)=y\}$
(i.e., the first exploration time for $y$)
and define 
the mapping
\beqnn
\phi \ : \ \cT_1 & \rightarrow & \cT_2 \\
	     y   & \rightarrow & p_{\xi^{(t)}}(t_y).  	
\eeqnn
It is straightforward to show that $\phi$ defines a mapping from ${\cT_1}$ to $\cT_2$ preserving the root.
We first show that $\phi$ is surjective. 
In order to do so, we start by showing that 
\be\label{chj}
\forall s\leq t, \  \phi(p_{\xi}(s)) = p_{\xi^{(t)}}(s).
\ee
For
$s\leq t$, we have
\beqnn
t_{p_\xi(s)} & = & \inf\{ u \ : \ p_\xi(u) = p_\xi(s)   \}  \\
	       & = & \inf\{u\leq s \ : \ \xi(u)=\xi(s)=\inf_{[u,s]} \xi \} \\
	        & = & \inf\{u\leq s \ : \ \xi^{(t)}(u)=\xi^{(t)}(s)=\inf_{[u,s]} \xi^{(t)} \}, 
\eeqnn
where the last equality follows from the fact that $\xi$ and $\xi^{(t)}$
coincide before time $t$. The latter identity implies
that
$
t_{p_\xi(s)} \in \{u\leq s \ : \ \xi^{(t)}(u)=\xi^{(t)}(s)=\inf_{[u,s]} \xi^{(t)} \} 
$ 
or equivalently that 
$$
p_{\xi^{(t)}}(t_{p_\xi(s)}) = p_{\xi^{(t)}}(s),
$$
which can be rewritten as
(\ref{chj}), 
as claimed earlier. In order to show surjectivity,
let us take $v\in\TT_2$ and
$s$ such that $v=p_{\xi^{(t)}}(s)$. We distinguish between two cases: 
(1) if $s\leq t$,
the previous result immediately implies that $v\in\phi(\cT_1)$, and
(2) $s> t$, 
since $\xi^{(t)}$
is continuous and non-increasing on $[t,\infty)$,
one can find
$s' \leq t$ such that 
$$
\xi^{(t)}(s)=\xi^{(t)}(s')=\inf_{[s',s]} \xi^{(t)}
$$
implying that $v=p_{\xi^{(t)}}(s')$ and we are back to case (1).

\medskip

It remains to show that $\phi$ is an isometry. 
Let $x_1,x_2\in\cT_1$. We have
$
\phi(x_i)=p_{\xi^{(t)}}(t_{x_i})
$
with $t_{x_i}\leq t$. Since $\xi$ and $\xi^{(t)}$
coincide up to time $t$,
we must have 
\beqnn
d_{\cT_2}(\phi(x_1),\phi(x_2))) & = & \xi^{(t)}(t_{x_2}) \ + \ \xi^{(t)}(t_{x_2}) \ - \ 2  \inf_{[t_{x_1}\wedge t_{x_2}, t_{x_1}\vee t_{x_2}]} \xi^{(t)} \\
					      & = & \xi(t_{x_2})\ +\ \xi(t_{x_1}) \ - \ 2  \inf_{[t_{x_1}\wedge t_{x_2}, t_{x_1}\vee t_{x_2}]} \xi \\
					      & =&  d_{\cT_1}(x_1,x_2).
\eeqnn

\end{proof}
The branch length ${\cal L}_\xi( \Tr(\cT_{\xi^{(t)}}))$
is obtained by adding up
all the branch lengths of the trimmed tree  $\Tr(\cT_{\xi^{(t)}})$.
Following the algorithm described in Proposition \ref{algo}, the total branch length
 is given by the sum of the $X_n(\xi^{(t)})$'s
or equivalently 
\beqn
{\cal L}_\xi( \Tr(\cT_{\xi^{(t)}})) 
& = & \sum_{n\geq1} \left( \xi_h^{(t)}(t_{n}^{(t)}) -  \xi_h^{(t)}(s_{n}^{(t)}) \right) \nonumber \\
& = & \sum_{n\geq1} \left( c^h(\xi^{(t)})(t_{n-1}^{(t)}) -  c^h(\xi^{(t)})(t_{n}^{(t)}) \right) \nonumber \\
& = & \lim_{s\uaw\infty} -c^h(\xi^{(t)})(s) \label{ch},
\eeqn
where we wrote $t_{n}^{(t)}:=t_{n}(\xi^{(t)}), s_{n}^{(t)}:=s_{n}(\xi^{(t)})$, and the second line
can be shown as in (\ref{local-x}).

\begin{lemma}
\beqnn
 \lim_{s\uaw\infty} c^h(\xi^{(t)})(s)= c^h(\xi)(t).
\eeqnn
\end{lemma}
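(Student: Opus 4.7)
The plan is to exploit the explicit shape of $\xi^{(t)}$ after time $t$: by construction, $\xi^{(t)}$ coincides with $\xi$ on $[0,t]$ and is continuous, non-increasing on $[t,\infty)$, eventually reaching (and staying at) $0$. The guiding intuition is that a monotone, non-increasing tail starting from a value in $[0,h]$ cannot push the two-sided reflection up against the upper barrier, so the upper compensator $c^h(\xi^{(t)})$ ought to be frozen from time $t$ onwards at $c^h(\xi)(t)$.

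The first step I would take is to apply Lemma~\ref{two-steps-ref} with $T=t$ to both $\xi$ and $\xi^{(t)}$, reading off from its proof the decomposition of the compensators
\[
c^h(f)(s) \;=\; c^h\!\bigl(f(\cdot\wedge t)\bigr)(s) \;+\; c^h\!\bigl(L^t(f)\bigr)(s),
\]
where $L^t(f) := R^t(f) + \Lambda_{0,h}(f)(t)$. Since $\xi^{(t)}(\cdot\wedge t) = \xi(\cdot\wedge t)$, both paths share the same first summand. Applying the decomposition to $\xi$ at $s=t$ (where $L^t(\xi)$ is constant on $[0,t]$, so its compensator vanishes) yields $c^h(\xi(\cdot\wedge t))(t) = c^h(\xi)(t)$. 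Since $\xi(\cdot\wedge t)$ is constant on $[t,\infty)$, its reflected path is constant there, and, by uniqueness in Theorem~\ref{Skor}, so is its upper compensator; hence $\lim_{s\uparrow\infty} c^h(\xi(\cdot\wedge t))(s) = c^h(\xi)(t)$.

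The heart of the argument — and the step I expect to be the chief obstacle — is to show that $c^h(L^t(\xi^{(t)})) \equiv 0$, which collapses the decomposition applied to $\xi^{(t)}$ to $c^h(\xi^{(t)}) = c^h(\xi(\cdot\wedge t))$. Here I would exploit monotonicity: on $[0,t]$ the path $L^t(\xi^{(t)})$ is constant, equal to $\Lambda_{0,h}(\xi^{(t)})(t) \in [0,h]$; on $[t,\infty)$ it equals $(\xi(t)-(s-t))^+ - \xi(t) + \Lambda_{0,h}(\xi^{(t)})(t)$, which is non-increasing. Thus $L^t(\xi^{(t)})$ is non-increasing throughout, starting from a value in $[0,h]$, so a direct computation gives $\Gamma^0(L^t(\xi^{(t)}))(s) = \max(L^t(\xi^{(t)})(s),0) \leq h$ for all $s$. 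Lemma~\ref{pr-s} then identifies the two-sided reflection of $L^t(\xi^{(t)})$ on $[0,h]$ with its one-sided reflection at $0$, which uses no downward push; uniqueness in Theorem~\ref{Skor} forces $c^h(L^t(\xi^{(t)})) \equiv 0$.

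Combining these three observations gives $\lim_{s\uparrow\infty} c^h(\xi^{(t)})(s) = \lim_{s\uparrow\infty} c^h(\xi(\cdot\wedge t))(s) = c^h(\xi)(t)$, as claimed. The two essential ingredients are the additive splitting of the compensator provided by the proof of Lemma~\ref{two-steps-ref} and the monotonicity-based reduction to the one-sided Skorohod reflection via Lemma~\ref{pr-s}; everything else is an easy consequence of the support property of the compensators.
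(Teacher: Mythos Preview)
Your proof is correct and follows essentially the same route as the paper: both use Lemma~\ref{two-steps-ref} at $T=t$ together with the non-increasing tail of $\xi^{(t)}$ and Lemma~\ref{pr-s} to conclude that the upper compensator is frozen after time $t$. The only cosmetic difference is that you invoke the compensator decomposition $c^h(f)=c^h(f(\cdot\wedge t))+c^h(L^t(f))$ explicitly, whereas the paper first records $c^h(\xi^{(t)})(t)=c^h(\xi)(t)$ directly and then argues that $dc^h(\xi^{(t)})=dc^h(m)=0$ on $[t,\infty)$; your function $L^t(\xi^{(t)})$ is exactly the paper's $m$.
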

\begin{proof}
Since $\xi^{(t)}$ and $\xi$ coincide up to $t$, we have
\be\label{mono}
c^h(\xi^{(t)})(t)=c^h(\xi)(t).
\ee
Furthermore, by Lemma \ref{two-steps-ref},
$$
\forall s\geq t, \ \La_{0,h}(\xi^{(t)})(s) \ = \ \La_{0,h} ( m )(s), \ 
\mbox{where $m(s)=\La_{0,h}(\xi)(t) + 1_{s\geq t}\left(\left(\xi(t) - (s-t)\right)^+ \ - \ \xi(t)\right).$}
$$
The function $m$ is non-increasing on $[t,\infty)$. From this observation, 
we easily get from the definition of the one-sided reflection $\Ga^0(\cdot)$ that 
$$
\forall s\geq0, \ \ \Ga^0(m)(s)\leq \Ga^0(m)(t)\leq h. 
$$
Lemma \ref{pr-s} then
implies that  $\La_{0,h} (m)=\Ga^0(m)$
and that
$c^h(m)=0$ (in other words, no compensator is needed to keep $m$ below level $h$). Finally,
since
$dc^h(m)=dc^h(\xi^{(t)})$ on $[t,\infty)$, 
we have 
$$
\lim_{s\uaw\infty} c^h(\xi^{(t)})(s) \ = \ c^h(\xi^{(t)})(t).
$$
The latter equation combined with (\ref{mono}) completes the proof of the lemma.
\end{proof}

Combining (\ref{eqp}), (\ref{ch}) with the previous lemma,
we get 
\be
\P(\sup_{[0,t]} z^\theta \leq h \ | \ \sigma(w))	\ =  \ \exp\left[2\theta \cdot  c^h(\xi)(t)\right]. \label{last1}
\ee
In order to prove our theorem, it remains to show that
$\La_{0,h}(w)$ is identical in law with a Brownian motion reflected (in a ``standard way'')
on $[0,h]$, and that $-c^h(\xi)$ is the local time of $\La_{0,h}(w)$ at $h$.

\begin{lemma}\label{reflection-inv}
For every continuous function $f$ with $f(0)=0$, $c^h(\Ga^0(f)) = c^h(f)$.
\end{lemma}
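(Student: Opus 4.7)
The plan is to identify $(c^0(f)-\kappa,\,c^h(f))$ as the Skorohod pair for $g := \Ga^0(f)$, where $\kappa(t) := (-\inf_{[0,t]} f)^+$ so that $g = f+\kappa$, and then invoke uniqueness in Theorem~\ref{Skor} to conclude $c^h(g) = c^h(f)$. The algebraic identity
\[
g + (c^0(f)-\kappa) + c^h(f) \;=\; f + c^0(f) + c^h(f) \;=\; \La_{0,h}(f) \;\in\; [0,h]
\]
disposes of the first Skorohod condition and shows that the level sets $\{\cdot=0\}$ and $\{\cdot=h\}$ of the candidate reflected path coincide with those of $\La_{0,h}(f)$. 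The monotonicity and ``vanishing off $\La_{0,h}(f)^{-1}(\{h\})$'' property of $c^h(f)$ transfers directly from the Skorohod problem for $f$. What remains are two claims: (A) $d\kappa$ is supported on $Z := \La_{0,h}(f)^{-1}(\{0\})$; (B) $c^0(f) - \kappa$ is non-decreasing.

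For (A), the measure $d\kappa$ charges only times $r$ at which $f(r) = \inf_{[0,r]} f < 0$. Suppose for contradiction that $\La_{0,h}(f)(r) > 0$ for some such $r$, and set $r^* := \sup\{s \leq r : \La_{0,h}(f)(s)=0\}$, which is well-defined since $\La_{0,h}(f)(0)=f(0)=0$. On $(r^*,r]$ we have $\La_{0,h}(f)>0$, so $c^0(f)$ is constant on this interval, and by continuity $c^0(f)(r) = c^0(f)(r^*)$. Hence
\[
\La_{0,h}(f)(r) - \La_{0,h}(f)(r^*) \;=\; \big(f(r)-f(r^*)\big) \;+\; \big(c^h(f)(r) - c^h(f)(r^*)\big),
\]
where both brackets are $\leq 0$: the first because $f(r)$ is the running minimum on $[0,r]$, the second because $c^h$ is non-increasing. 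This forces $\La_{0,h}(f)(r) \leq 0$, contradicting our assumption.

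For (B), I argue the stronger fact that $dc^0(f) \geq d\kappa$ as positive measures. Both are supported on $Z$, the first by the Skorohod equation for $f$ and the second by (A). On $Z$ we have $f + c^0(f) + c^h(f) = 0$, and since $Z$ is disjoint from $\La_{0,h}(f)^{-1}(\{h\})$ the measure $dc^h(f)$ vanishes on $Z$; thus, in the Stieltjes sense restricted to $Z$, $df = -dc^0(f)$. Restricting further to the support of $d\kappa$, where $f \equiv -\kappa$ and hence $df = -d\kappa$, we obtain $dc^0(f) = d\kappa$ on $\mathrm{supp}(d\kappa)$. On $Z \setminus \mathrm{supp}(d\kappa)$, $d\kappa = 0 \leq dc^0(f)$, and outside $Z$ both measures vanish. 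Hence $dc^0(f) \geq d\kappa$ globally, so $c^0(f) - \kappa$ is non-decreasing.

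The main subtlety is the Stieltjes manipulation of $df$ in (B): $f$ need not be of bounded variation globally, but on $\mathrm{supp}(d\kappa)$ it coincides with the BV function $-\kappa$, and on $Z$ with the BV function $-c^0(f) - c^h(f)$, so every differential used is a bona fide Radon measure on the appropriate carrier set, and the inequality $dc^0(f) \geq d\kappa$ can be checked piece by piece. Once (A) and (B) are in hand, all four Skorohod conditions hold for $(c^0(f)-\kappa,\,c^h(f))$ with respect to $g$, and uniqueness yields $c^h(\Ga^0(f)) = c^h(f)$.
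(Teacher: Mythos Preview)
Your argument is correct and reaches the desired conclusion, but it runs in the opposite direction from the paper's proof. The paper takes the Skorohod pair $(\bar c^0,\bar c^h)$ for $g:=\Ga^0(f)$ and checks that $(\kappa+\bar c^0,\bar c^h)$ solves the two-sided Skorohod problem for $f$; the only nontrivial point---that $d\kappa$ is carried by $\La_{0,h}(g)^{-1}(\{0\})$---is read off from Theorem~\ref{main1}, which gives $\La_{0,h}(g)\le g$ for $g\ge 0$, so every zero of $g=\Ga^0(f)$ is a zero of $\La_{0,h}(g)$. In that direction the monotonicity of the lower compensator is automatic (a sum of two nondecreasing functions), and uniqueness yields $c^h(f)=\bar c^h=c^h(\Ga^0(f))$ at once. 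You instead start from $(c^0(f),c^h(f))$ and \emph{subtract} $\kappa$, which obliges you to prove in (B) that the difference remains nondecreasing. What your route buys is independence from Theorem~\ref{main1}: your (A) is a short direct proof of $\mathrm{supp}(d\kappa)\subset\La_{0,h}(f)^{-1}(\{0\})$ that does not invoke $g_h\ge 0$.

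One point in (B) should be tightened. Writing ``$df=-dc^0(f)$ on $Z$'' is not meaningful a priori, since $f$ need not be of bounded variation; coinciding with a BV function on a closed set does not by itself define ``$df$ restricted to that set''. The clean way to carry out your idea is to bypass $f$ entirely. On the closed set $S:=\mathrm{supp}(d\kappa)$ you have, by (A) and the definition of $\kappa$, both $\La_{0,h}(f)=0$ and $\Ga^0(f)=0$, hence the \emph{pointwise} identity $c^0(f)-\kappa=-c^h(f)$ on $S$. Now use the elementary fact that two continuous BV functions agreeing on a closed set have the same Stieltjes measures restricted to that set (decompose $S^c$ into its open components and use continuity at the endpoints). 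Since $dc^h(f)$ is carried by $\La_{0,h}(f)^{-1}(\{h\})$, which is disjoint from $S\subset Z$, this gives $d(c^0(f)-\kappa)|_S=-dc^h(f)|_S=0$, i.e.\ $dc^0(f)|_S=d\kappa$. Off $S$ one has $d\kappa=0\le dc^0(f)$, and (B) follows rigorously.
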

\begin{proof}
In Theorem \ref{main1}, we showed that if $g\geq0$ then $g_h\geq0$, 
or equivalently 
$$
\La_{0,h}(g) \leq g.
$$
This implies that for every continuous
non-negative function $g$,
every zero of the function $g$,
is also a zero of the function $\La_{0,h}(g)$. On the other hand,
for any continuous function $f$ with $f(0)=0$,
 the definition of the one-sided Skorohod reflection at $0$ (see (\ref{one-sided}))
implies that
$\Ga^0(f)$ can be written as $f+c$ where $c$ is a non-decreasing continuous
function, only increasing at the zeros of the reflected path $\Ga^0(f)$.
Taking $g=\Ga^0(f)$ in the previous discussion, 
the set of zeros for $\Ga^0(f)$ is included in its $\La_{0,h}(\Ga^0(f))$ counter part,
and 
we get that the compensator $c(t):=-\inf_{[0,t]} f$
(for the one-sided reflection) only increases on the set of zeros of the doubly reflected path 
$\La_{0,h}(\Ga^0(f))$.
Next,
let
$\bar c^{0}$ and $\bar c^h$ be the compensators associated
with the function $\Ga^0(f)$, i.e. $\Lambda_{0,h}(\Gamma^0(f)) = (f + c) + \bar c^{0} + \bar c^h$.
where $\bar c^{0}$ and $\bar c^h$ satisfy the hypothesis of Theorem \ref{Skor} for the function $f+c$.
Since $\bar c^{0}$ only increases at the zeroes of $\La_{0,h}(\Ga^0(f))$,
the functions
$(c+\bar c^{0},\bar c^h)$ must solve the Skorohod equation for $f$ on the interval $[0,h]$.
By uniqueness of the solution, 
this readily implies that $\La_{0,h}(f)=\La_{0,h}(\Ga^0(f))$ and $\bar c^h\equiv c^h(\Ga^0(f))= c^h(f)$. 
\end{proof}

The previous lemma and (\ref{last1}) yield
\beqn
\P(\sup_{[0,t]} z^\theta \leq h \ | \ \sigma(w))	& = & \exp\left[2\theta \cdot  c^h(w)(t)\right]. \label{last}
\eeqn
As already explained in the previous section (see the proof of Lemma \ref{l1}), $\La_{0,h}(w)$ is identical in law with a Brownian motion reflected (in a ``standard way'')
on $[0,h]$, and $-c^h(w)$ is the local time of this process at $h$ (see again the proof of Lemma \ref{l1}
for more details). This completes 
the proof of Theorem \ref{teo1}.

\bigskip

{\bf Acknowledgments.} I thank P. Hosheit for helpful discussions and his careful reading of an early version of the present paper.

\end{document}